\documentclass[reqno,makeidx,11pt]{amsart}
\usepackage{amssymb,amsfonts,amsmath,graphicx
}
\usepackage[all]{xy}

\setlength{\oddsidemargin}{0.in}
\setlength{\evensidemargin}{0.in}
\setlength{\headsep}{0.5in}
\setlength{\headheight}{0.5in}
\setlength{\topmargin}{0.25in}
\setlength{\textheight}{7.5in}
\setlength{\textwidth}{6.4in}
\setlength{\marginparwidth}{1in}
\setlength{\marginparsep}{0.5in}
\setlength{\parskip}{0.05in}

\def\C{{\mathbb{C}}}

\def\F{{\mathbb{F}}}

\def\N{{\mathbb{N}}}
\def\0{{\mathbb{O}}}
\def\P{{\mathbb{P}}}

\def\Z{{\mathbb{Z}}}

\def\cB{{\mathcal B}}
\def\cC{{\mathcal C}}

\def\cE{{\mathcal E}}

\def\cG{{\mathcal G}}
\def\cH{{\mathcal H}}

\def\cO{{\mathcal O}}

\def\Id{{\rm Id \, }}

\def\IS{{\rm IS }}

\newcommand{\norm}[1]{{\left\|{#1}\right\|}}

\newcommand{\abs}[1]{{\left|{#1}\right|}}
\newcommand{\scal}[1]{{\left\langle{#1}\right\rangle}}
\newcommand{\set}[1]{{\left\{{#1}\right\}}}

\newcommand{\actson}{\curvearrowright}
\newcommand{\rd}{{\,\mathrm d}}

\def\fig{ \centerline{Fig. \the\count200\cGlobal\advance\count200 by 1}}
\count200=1

\newtheorem{thm}{Theorem}[section]
\newtheorem{cor}[thm]{Corollary}

\newtheorem{prop}[thm]{Proposition}

\newtheorem*{thm*}{Theorem}
\newtheorem*{lem*}{Lemma}
\newtheorem*{cor*}{Corollary}

\theoremstyle{definition}
\newtheorem{defn}[thm]{Definition}
\newtheorem{ex}[thm]{Example}
\newtheorem{exs}[thm]{Examples}

\theoremstyle{remark}
\newtheorem{rem}[thm]{Remark}
\newtheorem{rems}[thm]{Remarks}

\title[Some remarks about the weak containment property]{Some remarks about the weak containment property  for groupoids and semigroups}

\author{Claire Anantharaman-Delaroche}
\address{Institut Denis Poisson\newline
\indent Universit\'e d'Orl\'eans, Universit\'e de Tours et CNRS-UMR 7013,\newline
\indent Route de Chartres, B.P. 6759; 
F-45067 Orl\'eans Cedex 2, France}

\email{claire.anantharaman@univ-orleans.fr}

\subjclass[2010]{Primary  43A07 ; Secondary 46L55, 54H20, 22A22, 20M30,20M18}

\keywords{\emph{Groupoids, semigroups, weak containment, amenability}}

\begin{document}
\begin{abstract} A locally compact groupoid is said to have the weak containment property if its full $C^*$-algebra coincides with its reduced one. This property is strictly weaker than amenability and is known to be equivalent to amenability for transformation groupoids relative to actions of exact discrete groups. We believe that for general \'etale groupoids one should have the same equivalence of the two properties under  some mild exactness assumption. In this paper we try to support this statement.
\end{abstract}

\maketitle

\section*{Introduction}
The notion of amenability for locally compact groups takes many forms and is well understood (see \cite{Pat88} for instance). Amenability was introduced in the measured setting for discrete group actions and countable equivalence relations by Zimmer \cite{Zi3, Zi1,Zi2} at the end of the seventies. Soon after, Renault extended this notion to general measured groupoids and to locally compact groupoids \cite{Ren_book}. This was followed by further studies, for example in  \cite{AD87, AD02} for group actions. A detailed general study is provided in the monograph \cite{AD-R}. In particular it has long been known \cite{Ren_book, Ren91, AD-R} that every amenable groupoid has the weak containment property, in the sense that its full and reduced $C^*$-algebras coincide. Yet, at that time the converse was left open: {\em is a locally compact groupoid amenable when it has the weak containment property}? For locally compact groups, this is well known to be true, due to a theorem of Hulanicki \cite{Hu}. More generally, this is true for any transitive locally compact groupoid \cite{bun} (that is, a groupoid acting transitively on its set of units). Later on it was proved \cite{Mat} that the weak containment property for an action of an exact discrete group on a compact space implies the amenability of the action, and recently the compactness assumption was removed \cite{BEW}. This leads  to believe that for any \'etale groupoid amenability and weak containment are two equivalent properties. However, in 2015 Willett  exhibited \cite{Wil15} a nice simple example of an \'etale groupoid having the weak containment property without being amenable. Willett's example is a bundle of groups. A related example, exhibiting this time a non-amenable principal groupoid having the weak containment property was afterwards found in \cite{AF-S}.

As explained in Section \ref{sect:WCamen} of this paper, and illustrated by several examples, it seems that, knowing {\it a priori} some exactness property of a locally compact groupoid, it is amenable as soon as it has the weak containment property. In a previous version of this paper we had stated that, for an inner exact \'etale groupoid, amenability was a consequence of the weak containment property. However, our proof is false and irreparable and we leave open the problem. Inner exactness is introduced in Definition \ref{def:reduction}. This notion is interesting and plays now a role in other contexts (see \cite{BL}, \cite{BEW}). 

In Section \ref{sect:semi},we turn to the case of discrete semigroups. We limit ourself to semigroups not too far from the case of discrete groups, namely inverse semigroups (defined in Section \ref{sec:sg}) and sub-semigroups of groups. We  address a recurrent question concerning semigroups: {\it what is the right definition of amenability for a semigroup}? To the semigroups  that we consider are attached a full $C^*$-algebra  and a reduced $C^*$-algebra, generalizing the classical case of groups. There are  three obvious candidates for the notion of amenability, and it is natural to wonder what are the relations  between them:
 \begin{itemize}
\item[(1)]  {\it left amenability}, that is, there exists a left invariant mean on the semigroup;
\item[(2)] {\it weak containment property}, that is, the full and reduced $C^*$-algebras of the semigroup are the same;
\item[(3)] {\it nuclearity} of the reduced $C^*$-algebra of the semigroup.
\end{itemize}
This problem has been considered in many papers (see \cite{Pat78}, \cite{DP}, \cite{Nica92}, \cite{LR96}, \cite{CL}, \cite{Mil}, \cite{Li12}, \cite{Li13}, \cite{ES}, to cite a few of them).  Of course, these three properties are equivalent for a discrete  group. 

Let us consider first the case of an inverse semigroup (see Section \ref{sec:sg}), that we denote by $S$. A very useful feature of  such a semigroup is that its full and reduced $C^*$-algebras are described via the groupoid $\cG_S$ canonically associated to it \cite{Pat}. As a consequence, we see that (3) $\Rightarrow$ (2) in this case:
{\it if the reduced $C^*$-algebra $C^*_{r}(S)$ is nuclear, then $S$ has the weak containment property}, since $\cG_S$ is amenable.

 It is the only general fact that can be stated.  {\it The example given by Willett}, once  reinterpreted in the setting of inverse semigroups, {\it allows us to show that (2) $\not\Rightarrow$ (3) in general, for inverse semigroups} (see Example \ref{ex:Willett}). This answers a question raised in \cite[Remark 3.7]{ES}. This example is a Clifford inverse semigroup, that is an inverse semigroup which is a disjoint union $S = \sqcup_{e\in E} S_e$ of groups where the set $E$ of idempotents is contained in the center of $S$. This gives an example of Clifford semigroup which has the weak containment property, although not all groups $S_e$, $e\in E$, are amenable. This answers a question raised in \cite{Pat78}.
 
 We observe that the notion of left amenability is not interesting, except when $S$ has not a zero element, {\it i.e.,} an element $0$ such that $0 s = 0 = s 0$ for every $s \in S$. Indeed,  any inverse semigroup with a zero is left amenable, since the Dirac measure at zero is a left invariant mean. Even {\it if $S$ has no zero, the left amenability of $S$ does not imply the weak containment property, and a fortiori the nuclearity of} $C^*_{r}(S)$ (see Example \ref{ex:Willett}).

Next, we consider the case of a pair $(P,G)$ where $P$ is a sub-semigroup of a group $G$ containing the unit $e$.   As pointed out in \cite{Li12,Li13,Nor14}, a handy tool in order to study the $C^*$-algebras of $P$ is its left inverse hull $S(P)$. It is an inverse semigroup with nice properties (Propositions \ref{prop:Eunit} and \ref{prop:Toeplitz}). Following Xin Li \cite{Li12,Li13}, we define the full $C^*$-algebra of $P$ to be the full $C^*$-algebra of $\cG_{S(P)}$. This extends the definition given by Nica in \cite{Nica92} for quasi-lattice ordered groups (Definition \ref{def:qlog}). On the other hand, $C^*_{r}(P)$ is a quotient of the reduced $C^*$-algebra of $\cG_{S(P)}$. 

We first observe that {\it the left amenability of $P$ always implies the nuclearity of} $C^*_{r}(P)$ (see Proposition \ref{prop:left_amen}). 
{\it It is not true in general that the weak containment property implies the left amenability of} $P$  as shown by Nica in \cite{Nica92}. He considered the free group $G=\F_n$ on $n$ generators $a_1,\dots,a_n$ and $P= \P_n$ is the semigroup generated by $a_1,\dots,a_n$. Using the uniqueness property of the Cuntz algebra $\cO_n$, Nica proved that $\P_n$ has the weak containment property although it is not left amenable. Note that $C^*_{r}(\P_n)$ is the Cuntz-Toeplitz $C^*$-algebra, that is, the $C^*$-algebra generated by $n$ isometries $s_1,\dots s_n$ such that $\sum_{1\leq i \leq n} s_i s_i^{*} \lneqq 1$. The weak containment property is equivalent to the uniqueness of the Cuntz-Toeplitz $C^*$-algebra. Moreover, $C^*_{r}(\P_n)$ is an extension of $\cO_n$ by the algebra of compact operators and therefore is nuclear.

In \cite{Li12}, Xin Li  introduced the independence property for $P$, which can be rephrased by saying that the quotient map from $C^*_{r}(\cG_{S(P)})$ onto  $C^*_{r}(P)$ is injective. In this case (which occurs for instance for  quasi-lattice ordered groups), the nuclearity of $C^*_{r}(P)$ implies the weak containment property for $\cG_{S(P)}$ and thus for $P$ (see Proposition \ref{prop:indep}).

 Whether the weak containment property for $P$ implies the nuclearity of $C^*_{r}(P)$  is an old problem that was raised by several authors, for instance by Laca and Raeburn \cite[Remark 6.9]{LR96}, and more recently by Xin Li \cite[\S 9]{Li13}. It is still open. We wonder  whether this is true when $G$ is exact.
 
  The sections 1 and 2 are devoted to preliminaries on locally compact groupoids and the notion of amenability.
 
We emphasize that {\em the locally compact spaces will always be Hausdorff } (unless explicitly mentioned) {\it and second countable. Locally compact groupoids  will  always come equipped with a Haar system and Hilbert spaces will be separable}.

\section{Preliminaries}

\subsection{Groupoids} We assume that the reader is familiar with the basic definitions about groupoids. For  details we refer to \cite{Ren_book}, \cite{Pat}. Let us recall some  notation and terminology.  A {\it groupoid} consists of a set $\cG$, a subset $\cG^{(0)}$ called the set of {\it units}, two maps $r,s :\cG \to \cG^{(0)}$ called respectively the {\it range} and {\it source} maps,  a {\it composition law} $(\gamma_1,\gamma_2) \in \cG^{(2)} \mapsto \gamma_1\gamma_2\in \cG$, where
$$\cG^{(2)} = \set{(\gamma_1,\gamma_2)\in \cG\times \cG : s(\gamma_1) = r(\gamma_2)},$$
and an {\it inverse} map $\gamma\mapsto \gamma^{-1}$. These operations satisfy obvious rules, such as the facts   that the composition law ({\it i.e.}, product) is associative, that the elements of $\cG^{(0)}$ act as units ({\it i.e.}, $r(\gamma)\gamma = \gamma = \gamma s(\gamma)$), that $\gamma\gamma^{-1} = r(\gamma)$, $\gamma^{-1}\gamma = s(\gamma)$, and so on (see \cite[Definition 1.1]{Ren_book}). For $x\in \cG^{(0)}$ we set $\cG^x = r^{-1}(x)$ and $\cG_x = s^{-1}(x)$. Usually, $X$ will denote the set of units of $\cG$.

A {\it locally compact groupoid} is a groupoid $\cG$ equipped with a  locally compact topology such  that the structure maps are continuous, where $\cG^{(2)}$ has the topology induced by $\cG\times\cG$ and $\cG^{(0)}$ has the topology induced by $\cG$. We assume that the range (and therefore the source) map is open, which is a necessary condition for the existence of a Haar system.  We denote by $\cC_c(\cG)$ the algebra of continuous complex valued functions with compact support on $\cG$.

 \begin{defn}\label{def:Haar} Let $\cG$ be a locally compact groupoid. A {\it Haar system} on $\cG$ is a family $ \lambda=(\lambda^x)_{x\in X}$ of measures on $\cG$, indexed by the set $X= \cG^{(0)}$ of units, satisfying the following conditions:
 \begin{itemize}
 \item {\it Support}: $\lambda^x$ has exactly $\cG^x$ as support, for every $x\in X$;
 \item {\it Continuity}: for every $f\in \cC_c(\cG)$, the function $x\mapsto  \lambda(f)(x)=\int_{\cG^{x} }f\rd\lambda^{x}$ is continuous;
\item {\it Invariance}: for $\gamma\in \cG$ and $f\in \cC_c(\cG)$, we have 
$$\int_{\cG^{s(\gamma)}} f(\gamma\gamma_1) \rd\lambda^{s(\gamma)}(\gamma_1) =  \int_{\cG^{r(\gamma)}} f(\gamma_1) \rd\lambda^{r(\gamma)}(\gamma_1).$$
\end{itemize}
\end{defn}

\begin{exs}\label{exs:groupoids} (a) {\it Transformation groupoid.} Let $G$ be a locally compact group acting conti\-nuously to the right on a locally compact space $X$. The topological product space $X\times G$ has a natural groupoid structure with $X$ as space of units. The range and source maps are given respectively by $r(x,g) = x$ and $s(x,g) = xg$. The product is given by $(x,g)(xg, h) = (x,gh)$ and the inverse by $(x,g)^{-1} = (xg,g^{-1})$. We denote by $X\rtimes G$ this groupoid. A Haar system $\lambda$ is given by $\lambda^{x} = \delta_x\times \tilde{\lambda}$ where $\tilde{\lambda}$ is a left Haar measure on $G$. Similarly, one defines $G\ltimes X$ for a left action of $G$.

(b) {\it Groupoid group bundle.} It is a locally compact groupoid such that the range and source maps are equal. By \cite[Lemma 1.3]{Ren91}, one can choose, for $x\in \cG^{(0)}$, a left Haar measure $\lambda^x$ on the group $\cG^x =\cG_x$ in such a way that $(\lambda^x)_{x\in X}$ forms a Haar system on $\cG$. An explicit example will be given in Section \ref{sec:reduce}.

(c) {\it Etale groupoids.} A locally compact groupoid is called {\it \'etale} when its range (and therefore its source) map is a local homeomorphism from $\cG$ onto $\cG^{(0)}$. Then $\cG^x$ and $\cG_x$ are discrete and $\cG^{(0)}$ is open in $\cG$. Moreover the family of counting measures $\lambda^x$ on $\cG^x$ forms a Haar system (see \cite[Proposition 2.8]{Ren_book}). Groupoids associated with actions, or more generally with partial actions (that we define now), of discrete groups are \'etale.

(d)   {\it Partial transformation groupoid.} A partial action of a discrete group $G$ on a locally compact space $X$ is a family 
$(\beta_g)_{g\in G}$  of partial homeomorphisms of $X$ between open 
subsets, such that $\beta_e = \Id_X$ and $\beta_g\beta_h \leq \beta_{gh}$ for $g,h\in G$, meaning that $\beta_{gh}$ extends $\beta_g\beta_h$. Then 
$$G\ltimes X = \set{(g,x) : g\in G, x\in X_{g^{-1}}}\subset G\times X$$
 with the  topology induced from the product topology, where $X_{g^{-1}}$ is the domain of $\beta_g$, is an \'etale groupoid. The range and source maps of $G\rtimes X$ are given respectively by $r(g,x) = \beta_g(x)$ and $s(g,x) = x$. The product is defined by $(g,x)(h,y) = (gh,y)$ when $x= \beta_h(y)$, and the inverse is given by $(g,x)^{-1} = (g^{-1}, \beta_g(x))$.
\end{exs}

 {\em In the sequel, the locally compact groupoids  are implicitly supposed to be Hausdorff, second countable, and equipped with a Haar system $\lambda$. In the three above examples, $\lambda$ will be the mentioned Haar system}.

\subsection{Representations of a locally compact groupoid.} Let $(\cG, \lambda)$ be a locally compact groupoid with a Haar system  $\lambda$. We set $X = \cG^{(0)}$. The space $\cC_c(\cG)$ is an involutive algebra with respect to the following operations for $f,g\in \cC_c(\cG)$:
\begin{align}
(f*g)(\gamma) &= \int f(\gamma_1)g(\gamma_{1}^{-1}\gamma) d\lambda^{r(\gamma)}(\gamma_1)\\
f^*(\gamma) & =\overline{f(\gamma^{-1})}. 
\end{align}

We define a norm on $\cC_c(\cG)$ by
$$\norm{f}_I = \max \set{\sup_{x\in X} \int \abs{f(\gamma)}\rd\lambda^x(\gamma), \,\,\sup_{x\in X} \int \abs{f(\gamma^{-1})}\rd\lambda^x(\gamma)}.$$

\begin{defn}\label{def:rep} A {\it representation}  of $\cC_c(\cG)$ is a $*$-homomorphism $\pi$ from $\cC_c(\cG)$ into the $C^*$-algebra $\cB(H)$ of bounded operators of a Hilbert space $H$ such that $\norm{\pi(f)} \leq \norm{f}_I$ for every $f \in \cC_c(\cG)$.
\end{defn}

\begin{ex}\label{ex:rep} Let $x\in X = \cG^{(0)}$. We denote by $\lambda_x$ the image of $\lambda^x$ by the inverse map $\gamma \mapsto \gamma^{-1}$. Let $\pi_x : \cC_c(\cG) \to \cB\big(L^2(\cG_x, \lambda_x)\big)$ be defined by
$$(\pi_x(f)\xi)(\gamma) = \int_{\cG_x} f(\gamma\gamma_1^{-1}) \xi(\gamma_1) \rd \lambda_x(\gamma_1)$$
for $f\in \cC_c(\cG)$ and $\xi\in L^2(\cG_x, \lambda_x)$. Then $\pi_x$ is a representation of $\cC_c(\cG)$.

More generally, let $\mu$ be a (Radon) measure on $X$. We denote by $\nu =\mu\circ\lambda$ the measure on $\cG$ defined by the formula 
$$\int_\cG f \rd\nu = \int_X \big(\int_{\cG^x} f(\gamma) \rd\lambda^x(\gamma)\big)\rd\mu(x).$$
Let $\nu^{-1}$ be the image of $\nu$ under the inverse map.  For $f\in \cC_c(\cG)$ and $\xi\in L^2(\cG,\nu^{-1})$ we define the operator 
$\hbox{Ind}_\mu(f)$ by the formula
$$\big(\hbox{Ind}_\mu(f)\xi\big)(\gamma) = \int _{\cG_{s(\gamma)}}  f(\gamma\gamma_1^{-1}) \xi(\gamma_1) \rd \lambda_{s(\gamma)}(\gamma_1).$$
Then $\hbox{Ind}_\mu$ is a representation of $\cC_c(\cG)$, called the {\it induced representation associated with} $\mu$.
We have $\hbox{Ind}_{\delta_x} = \pi_x$.
\end{ex}

The {\it full $C^*$-algebra} $C^*(\cG)$ of $\cG$ is the completion of $\cC_c(\cG)$ with respect to the norm
$$\norm{f} = \sup \norm{\pi(f)}$$
where $\pi$ runs over all representations  of $\cC_c(\cG)$. The {\it reduced $C^*$-algebra} $C_{r}^*(\cG)$ is the completion of $\cC_c(\cG)$ with respect to  the norm
$$\norm{f} = \sup_{x\in X} \norm{\pi_x(f)}.$$

Obviously, the identity map of $\cC_c(\cG)$ extends to a surjective homomorphism from $C^*(\cG)$ onto $C^*_{r}(\cG)$.

\begin{rem} Assume that $\cG$ is a locally compact group. Let us observe that the involution on $\cC_c(\cG)$ that we introduced is not the usual one in group theory. If $\Delta$ denotes the modular function of $\cG$, usually the involution is defined by $f^\star(\gamma) = \overline{f(\gamma^{-1})}\Delta(\gamma^{-1})$. The map $f\mapsto \tilde{f}$ where $\tilde{f}(\gamma) = f(\gamma) \Delta(\gamma)^{-1/2}$ is an isomorphism of involutive algebra between the $*$-algebra $\cC_c(\cG)$ with the involution $^*$ introduced in (2) and the usual one with the involution $^\star$. The full and reduced $C^*$-algebras defined above are then canonically identified respectively with the classical full and reduced $C^*$-algebras of the group $\cG$.

Similarly, the full and reduced $C^*$-algebras of a transformation groupoid $X\rtimes G$ are identified with the full crossed product $\cC_0(X) \rtimes G$ and the reduced crossed product $\cC_0(X) \rtimes_r G$ respectively, where $\cC_0(X)$ is the $C^*$-algebra of complex valued functions on $X$ vanishing to $0$ at infinity.
\end{rem}

A familiar result in group theory relates in a bijective and natural way    the non-degenerate representations of the full group $C^*$-algebra and the unitary representations of the group. A similar result holds for groupoids. Its statement requires some preparation.

Let $(\cG,\lambda)$ be a locally compact groupoid and let $\mu$ be a (Radon) measure on $X= \cG^{(0)}$. We set $\nu =\mu\circ\lambda$. We say that $\mu$ is {\it quasi-invariant} if $\nu$ is equivalent to $\nu^{-1}$. In this case, we denote by $\Delta$ the Radon-Nikod\'ym derivative $\rd\nu/\rd\nu^{-1}$. A groupoid $(\cG,\lambda)$ equipped with a quasi-invariant measure $\mu$ is called a {\it measured groupoid}.
 
 \begin{defn}\label{def:unitary_rep} A {\it unitary representation} of $\cG$ is a triple $(\mu,H, U)$ where
 \begin{itemize}
 \item[(i)] $\mu$ is a quasi-invariant measure on $X$;
 \item[(ii)] $H = (\set{H_x: x\in X}, \cE)$ is a measurable field of Hilbert spaces over $X$ (where $\cE$ is a fundamental sequence of measurable vector fields);
 \item[(iii)] $U$ is a measurable action of $\cG$ on $H$ by isometries, that is, for every $\gamma\in \cG$ we have an isometric isomorphism $U(\gamma) : H_{s(\gamma)} \to H_{r(\gamma)}$ such that
 \begin{itemize}
 \item[(a)] for $x\in X$, $U(x)$ is the identity map of $\cH_x$;
 \item[(b)] for $(\gamma,\gamma_1) \in \cG^{(2)}$, $U(\gamma\gamma_1) = U(\gamma)U(\gamma_1)$;
 \item[(c)] for $\xi,\eta\in \cE$, the function $\gamma\mapsto\scal{\xi\circ r(\gamma), U(\gamma)\eta\circ s(\gamma)}_{r(\gamma)}$ is measurable.
 \end{itemize}
 \end{itemize}
 \end{defn}
 
 We denote by $\cH = L^2(X, H,\mu)$ the Hilbert space of square integrable sections of $H$, and for $f\in \cC_c(\cG)$ we define the operator $\pi_U(f)$ on $\cH$ by the formula
 $$\scal{\xi, \pi_U(f)\eta} = \int_X\Big(\int_{\cG^x}f(\gamma)\Delta(\gamma)^{-1/2}\scal{\xi\circ r(\gamma), U(\gamma)\eta\circ s(\gamma)}_{r(\gamma)}\rd\lambda^x(\gamma)\Big) \rd\mu(x).$$
 Then $f\mapsto \pi_U(f)$ is a representation of $\cC_c(\cG)$, called the {\it integrated form of $(\mu,H, U)$} (or simply $U$). A crucial  result, due to J. Renault, asserts that every representation of $\cC_c(\cG)$ can be disintegrated. Here, the fact that the groupoid is assumed to be second countable is needed.
 
 \begin{thm}{\rm(\cite[Proposition 4.2]{Ren87})} Let $\pi$ be a non-degenerate representation of $\cC_c(\cG)$ on a Hilbert space $\cH$. There is a unitary representation $(\mu,H, U)$ of $\cG$ such that $\pi$ is unitary equivalent to the integrated form $\pi_U$ of $(\mu,H, U)$. We say that $\pi$ disintegrates over $\mu$.
 \end{thm}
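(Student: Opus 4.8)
The plan is to follow the classical template relating representations of $\cC_c(G)$ to unitary representations of a group $G$, adapted to the groupoid setting, reducing everything to a statement about a measurable cocycle on $\cG$. The starting point is to extract the quasi-invariant measure and the field of Hilbert spaces. The algebra $\cC_0(X)$ acts on $\cC_c(\cG)$ by multipliers via $(\phi f)(\gamma)=\phi(r(\gamma))f(\gamma)$, with $\norm{\phi f}_I\le\norm{\phi}_\infty\,\norm{f}_I$, so that $\pi$ — being non-degenerate and $I$-norm contractive — extends to a non-degenerate representation $M$ of $\cC_0(X)$ on $\cH$. Since $\cG$ is second countable, $\cH$ is separable and $\cC_0(X)$ is a separable commutative $C^*$-algebra with spectrum $X$; the spectral theorem in its direct-integral form then produces a (Radon) measure $\mu$ on $X$, a measurable field $H=(\set{H_x},\cE)$, and a unitary identification $\cH\cong L^2(X,H,\mu)$ under which $M(\phi)$ becomes pointwise multiplication by $\phi$. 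This yields the data (i) and (ii) of Definition \ref{def:unitary_rep} and pins down the measure class of $\mu$.

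Next I would recover the action $U$. Writing $(\phi\cdot f\cdot\psi)(\gamma)=\phi(r(\gamma))f(\gamma)\psi(s(\gamma))$ for $\phi,\psi\in\cC_0(X)$, one has the covariance $M(\phi)\pi(f)M(\psi)=\pi(\phi\cdot f\cdot\psi)$, which shows that the operators $\pi(f)$ intertwine the fibres exactly as dictated by the convolution formula. Passing to the direct integral, this covariance forces each $\pi(f)$ to be given, weakly, by an integral over $\cG$ against $\lambda$ of a field of operators between the fibres $H_{s(\gamma)}$ and $H_{r(\gamma)}$; disintegrating the sesquilinear forms $\scal{\xi,\pi(f)\eta}$ against the measure $\nu=\mu\circ\lambda$ on $\cG$ extracts a $\nu$-measurable field $\gamma\mapsto U(\gamma)\in\cB(H_{s(\gamma)},H_{r(\gamma)})$. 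The $*$-structure does the remaining bookkeeping: comparing $\scal{\xi,\pi(f^*)\eta}$ with $\scal{\pi(f)\xi,\eta}$, in which $f^*(\gamma)=\overline{f(\gamma^{-1})}$ swaps the roles of $\nu$ and $\nu^{-1}$, forces $\nu\sim\nu^{-1}$ — that is, quasi-invariance of $\mu$ — with Radon--Nikod\'ym derivative $\Delta$, and simultaneously forces $U(\gamma)$ to be isometric with $U(\gamma^{-1})=U(\gamma)^*$ for $\nu$-almost every $\gamma$. An analogous computation with a convolution $f*g$ produces the cocycle identity $U(\gamma_1\gamma_2)=U(\gamma_1)U(\gamma_2)$ for almost every composable pair, and the half-density factor $\Delta(\gamma)^{-1/2}$ appears precisely to reconcile the two sides.

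The main obstacle, and the step where second countability is genuinely needed, is upgrading these almost-everywhere identities to the honest, everywhere-defined unitary representation required by Definition \ref{def:unitary_rep}: at this stage $U$ is only a measurable cocycle modulo null sets, so conditions (a), (b), (c) hold only a.e. Here I would invoke Ramsay's regularization theory for analytic (equivalently, standard Borel) groupoids: a Borel field satisfying the cocycle equation almost everywhere agrees, off a $\mu$-null $\cG$-invariant subset of $X$, with a strict Borel cocycle defined and multiplicative everywhere on the reduction. Discarding such an inessential null set alters neither the measure class of $\mu$ nor the integrated form, so after this reduction one obtains a genuine unitary representation $(\mu,H,U)$; unwinding the disintegration of $\scal{\xi,\pi(f)\eta}$ then shows it coincides with the integral defining $\pi_U$, and the identification of $\cH$ with $L^2(X,H,\mu)$ is the sought unitary equivalence. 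I expect the delicate points to be the measurable selection needed to define $U(\gamma)$ coherently across fibres and the passage from almost-everywhere to everywhere multiplicativity, both of which are standard but technical consequences of the second-countability hypothesis.
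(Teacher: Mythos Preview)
The paper does not prove this theorem; it merely quotes it from Renault's 1987 paper and moves on. Your sketch is a faithful outline of the standard argument (essentially Renault's own): extend $\pi$ to a representation of $\cC_0(X)$ via the multiplier action, apply the direct-integral form of the spectral theorem to obtain $(\mu,H)$, read off quasi-invariance and the a.e.\ cocycle $U$ from the $*$- and convolution identities, and finally invoke Ramsay-type regularization to pass from an almost-everywhere cocycle to a strict one. You have correctly flagged the two genuinely delicate points (the measurable selection of $U(\gamma)$ across fibres, and the upgrade from a.e.\ to everywhere multiplicativity), both of which rely on second countability. There is nothing to compare against in the present paper, but as an outline of the cited result your proposal is sound.
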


 \begin{ex}\label{ex:regular} The {\it left regular representation of $\cG$ over a quasi-invariant measure} $\mu$ is $(\mu, H= L^2(\cG,\lambda), L)$ where $L^2(\cG,\lambda) = \big(\set{L^2(\cG^x,\lambda^x):x\in X}, \cE= \cC_c(\cG)\big)$, and 
 $$L(\gamma) : L^2(\cG^{s(\gamma)},\lambda^{s(\gamma)}) \to L^2(\cG^{r(\gamma)},\lambda^{r(\gamma)})$$ is given, for $\xi \in   L^2(\cG^{s(\gamma)},\lambda^{s(\gamma)})$, $\gamma_1\in \cG^{r(\gamma)}$, by
 $$\big(L(\gamma)\xi\big)(\gamma_1) = \xi(\gamma^{-1}\gamma_1).$$
 
 Note that $ L^2(X, H,\mu) = L^2(\cG, \mu\circ\lambda)$ and that, for $f\in \cC_c(\cG)$, $\xi \in L^2(\cG, \mu\circ\lambda)$
 we have 
 $$\big(\pi_L(f)\xi\big)_x = \int_{\cG^{x}} f(\gamma) \Delta(\gamma)^{-1/2} L(\gamma)\xi\circ s(\gamma) \rd \lambda^x(\gamma).$$
 
 It is well known (and easy to see) that the map $W: L^2(\cG,\nu) \to L^2(\cG,\nu^{-1})$ defined by the formula
 $W\xi = \Delta^{1/2}\xi$ is an isometric isomorphism which implements a unitary equivalence between $\pi_L$ and $\hbox{Ind}_\mu$.
 \end{ex}

 \subsection{About reductions of a groupoid}\label{sec:reduce} Let $\cG$ be a groupoid and $Y$ a subset of $X= \cG^{(0)}$. We set $\cG(Y) = r^{-1}(Y)\cap s^{-1}(Y)$. Then $\cG(Y)$ is a  subgroupoid of $\cG$ called the {\it reduction of $\cG$ by $Y$}.
 When $Y$ is reduced to a single element $x$, then $\cG(x) = r^{-1}(x) \cap s^{-1}(x)$ is a  group called the {\it isotropy group of $\cG$ at $x$}. 
 
 Let now $\cG$ be a  locally compact groupoid with Haar system and let $Y$ be a locally compact subset of $X$ which is $\cG$-invariant, meaning that for $\gamma\in \cG$, we have $r(\gamma) \in Y$ if and only if $s(\gamma)\in Y$.
 Then $\cG(Y)$ is a locally compact groupoid whose Haar system is obtained by restriction of the Haar system of $\cG$.
  
 Let us consider the general situation where  a closed $\cG$-invariant subset $F$ of $X$ is given and set $U = X\setminus F$. It is well known that the inclusion
$\iota_U :  \cC_c(\cG(U) )\to \cC_c(\cG)$ extends to  injective homomorphisms from $C^*(\cG(U))$ into $C^*(\cG)$ and from
$C^*_{r}(\cG(U))$ into $C^*_{r}(\cG)$. Similarly, the restriction map $p_F: \cC_c(\cG) \to \cC_c(\cG(F))$ extends to  surjective homomorphisms from $C^*(\cG)$ onto $C^*(\cG(F))$ and from
$C^*_{r}(\cG)$ onto $C^*_{r}(\cG(F))$. Moreover the sequence 
$$0 \rightarrow C^*(\cG(U)) \rightarrow C^*(\cG) \rightarrow C^*(\cG(F)) \rightarrow 0$$
is exact. For these facts, we refer to \cite[page 102]{Ren_book}, \cite[Section 2.4]{HS}, or to \cite[Proposition 2.4.2]{Ram} for a detailed proof. 

On the other hand, the corresponding sequence with respect to the reduced $C^*$-algebras is not always exact, as shown for a non-Hausdorff groupoid by Skandalis in the Appendix of \cite{Ren91}. 

\begin{ex}\label{ex:HLS} Another interesting class of examples was provided in \cite{HLS} by Higson, Lafforgue and Skandalis. There, the authors consider a residually finite group $\Gamma$ and an decreasing sequence $\Gamma \supset N_0 \supset N_1 \cdots \supset N_k \supset \cdots$ of finite index normal subgroups with $\cap_k N_k = \set{e}$. Let  $\widehat\N =\N\cup\set{\infty}$ be the Alexandroff compactification of $\N$. We set $N_\infty = \set{e}$ and, for $k\in  \widehat\N$, we denote by $q_k : \Gamma \to \Gamma/N_k$ the quotient homomorphism. Let $\cG$ be the quotient of $\widehat\N\times \Gamma$ with respect to the equivalence relation
$$(k,t)\sim (l,u) \,\,\,\hbox{if} \,\,\, k=l \,\,\,\hbox{and}\,\,\, q_k(t) = q_k(u).$$
Equipped with the quotient topology, $\cG$ has a natural structure of  (Hausdorff) \'etale locally compact groupoid group bundle: its space of units is $\widehat\N$, the range and source maps are given by $r([k,t]) = s([k,t]) = k$, where $[k,t] = (k,q_k(t))$ is the equivalence class of $(k,t)$. The fibre $\cG(k)$ of the bundle is the quotient group $\Gamma/N_k$ if $k\in \N$ and $\Gamma$ if $k=\infty$. We call this groupoid an HLS-{\it groupoid}.  A basic result of \cite{HLS} is that the sequence
$$0 \longrightarrow C^*_{r}(\cG(\N))\longrightarrow C^*_{r}(\cG) \longrightarrow C^*_{r}(\cG(\infty)) \longrightarrow 0$$
is not exact whenever  $\Gamma$ is infinite and has Kazdhan's property (T) (it is not even exact in $K$-theory!). In fact, we will see in Proposition \ref{prop:HLS} that this sequence is exact if and only if $\Gamma$ is amenable.
\end{ex}

 \subsection{Crossed products} For the definition of actions of groupoids on $C^*$-algebras we refer to \cite{KS04}. Let us recall a few facts.
 
 \begin{defn} Let $X$ be a locally compact space. A $\cC_0(X)$-{\it algebra} is a $C^*$-algebra $A$ equipped with a homomorphism $\rho$ from $\cC_0(X)$ into the centre of the multiplier algebra of $A$, which is non-degenerate  in the sense that there exists an approximate unit $(u_\lambda)$ of $\cC_0(X)$ such that $\lim_\lambda \rho(u_\lambda) a = a$ for every $a\in A$.
\end{defn}

Given $f\in \cC_0(X)$ and $a\in A$, for simplicity we will write $f a$ instead of $\rho(f) a$.

Let $U$ be an open subset of $X$ and $F= X\setminus U$. We view $\cC_0(U)$ as an ideal of $\cC_0(X)$ and we denote by $\cC_0(U) A$ the closed linear span of $\set{fa: f\in \cC_0(U), a\in A}$. It is a closed ideal of $A$ and in fact, we have $\cC_0(U) A = \set{fa: f\in \cC_0(U), a\in A}$ (see \cite[Corollaire 1.9]{Blan}). We set $A_F = A/\cC_0(U) A$ and whenever $F = \set{x}$ we write $\cC_x(X)$ instead of $\cC_0(X\setminus\set{x})$ and $A_x$ instead of $A_{\set{x}}$. We denote by $e_x : A \to A_x$ the quotient map and for $a\in A$ we set $a(x) = e_x(a)$. Recall that  $\norm{a} = \sup_{x\in X}\norm{a(x)}$ (so that $a \mapsto (a(x))_{x\in X}$ from $A$ into $\prod_{x\in X} A_x$ is injective) and that $x\mapsto \norm{a(x)}$ is upper semi-continuous (see \cite{Rie,Blan}). Then, $(A, \set{e_x : A \to A_x}_{ x\in X}, X)$ is an upper semi-continuous field of $C^*$-algebras.

 Let $A$ and $B$ be two $\cC_0(X)$-algebras. A {\it morphism $\alpha: A \to B$ of $\cC_0(X)$-algebras} is a morphism of $C^*$-algebras which is $\cC_0(X)$-linear, that is, $\alpha(fa) = f\alpha(a)$ for $f\in \cC_0(X)$ and $a\in A$. For $x\in X$, in this case $\alpha$ factors through a morphism $\alpha_x : A_x \to B_x$ such that $\alpha_x(a(x)) = \alpha(a)(x)$.
 
 Let $Y,X$ be locally compact spaces and $f :Y\to X$ a continuous map. To any $\cC_0(X)$-algebra $A$ is associated a $\cC_0(Y)$-algebra $f^*A = \big(\cC_0(Y)\otimes A\big)_F$ where $F= \set{(y,f(y)):y\in Y}\subset Y\times X$. For $y\in Y$, we have $\big(f^* A)_y = A_{f(y)}$ (see \cite{LeG, KS04}).

\begin{defn}(\cite{LeG, KS04}) Let $(\cG,\lambda)$ be a locally compact groupoid with a Haar system and $X = \cG^{(0)}$. An {\it action} of $\cG$ on a $C^*$-algebra $A$ is given by a structure of $\cC_0(X)$-algebra on $A$ and an isomorphism $\alpha : s^* A \to r^* A$ of $\cC_0(\cG)$-algebras such that for every $(\gamma_1,\gamma_2)\in \cG^{(2)}$ we have $\alpha_{\gamma_1\gamma_2} = \alpha_{\gamma_1}\alpha_{\gamma_2}$, where $\alpha_\gamma : A_{s(\gamma)} \to A_{r(\gamma)}$ is the isomorphism deduced from $\alpha$ by factorization.

When $A$ is equipped with such an action, we say that $A$ is a {\it $\cG$-$C^*$-algebra}.
\end{defn}

Let $A$ be a $\cG$-$C^*$-algebra. We set $\cC_c(r^*(A)) = \cC_c(\cG)r^*(A)$. It is the space of the continuous sections with compact support of the upper semi-continuous field of $C^*$-algebras defined by the $\cC_0(\cG)$-algebra $r^*A$. Then, $ \cC_c(\cG)r^*(A)$ is a $*$-algebra with respect to the following operations:
$$(f*g)(\gamma) = \int_{\cG^{r(\gamma)}} f(\gamma_1) \alpha_{\gamma_1}\big(g(\gamma_1^{-1}\gamma)\big) \rd\lambda^{r(\gamma)}(\gamma_1)$$
and
$$f^*(\gamma) = \alpha_\gamma\big(f(\gamma^{-1})^*\big)$$
(see \cite[Proposition 4.4]{MW}). We define a norm on $\cC_c(r^*(A))$ by
$$\norm{f}_I = \max \set{\sup_{x\in X} \int_{\cG^x}\norm{f(\gamma)}\rd\lambda^x(\gamma),\quad \sup_{x\in X} \int_{\cG^x}\norm{f(\gamma^{-1})}\rd\lambda^x(\gamma)}.$$
The {\it full crossed product} $A\rtimes_\alpha \cG$ is the enveloping $C^*$-algebra of the Banach $*$-algebra obtained by completion of $\cC_c(r^*(A))$ with respect to $\norm{\cdot}_I$.

Let us now define the reduced crossed product. For $x\in X$ let us consider the Hilbert $A_x$-module $L^2(\cG_x,\lambda_x)\otimes A_x$, defined as the completion of the space $\cC_c(\cG_x; A_x)$ of continuous compactly supported functions from $\cG_x$ into $A_x$, with respect to the $A_x$-valued inner product
$$\scal{\xi,\eta} = \int_{\cG_x} \xi(\gamma)^*\eta(\gamma) \rd\lambda_{x}(\gamma).$$
For $f\in \cC_c(r^*(A))$, $\xi\in \cC_c(\cG_x; A_x)$ and $\gamma\in \cG_x$, we set
$$\big(\pi_x(f)\xi\big)(\gamma) = \int_{\cG_x} \alpha_{\gamma}^{-1}\big(f(\gamma\gamma_{1}^{-1})\big)\xi(\gamma_1) \rd\lambda_x(\gamma_1).$$
Then $\pi_x(f)$ extends to a bounded operator with adjoint acting on the Hilbert $A_x$-module $L^2(\cG_x,\lambda_x)\otimes A_x$. In this way we get a representation of the $*$-algebra  $\cC_c(r^*(A))$. The reduced crossed product $A\rtimes_{\alpha,r} \cG$ is the completion of $\cC_c(r^*(A))$ with respect to the norm $\norm{f} = \sup_{x\in X}\norm{\pi_x(f)}$ (see \cite{KS04})\footnote{$\pi_x$ is what is denoted $\Lambda_x$ in \cite{KS04} except that the authors consider $\cC_c(s^*(A))$ instead of $\cC_c(r^*(A))$. This explains why our formula is not exactly the same.}.

\begin{rems}\label{rem:cross_P} (a) We note that if $Y$ is a locally compact $\cG$-invariant subset of $X = \cG^{(0)}$, then $\cC_0(Y)$ has a natural structure of $\cG$-$C^*$-algebra. Moreover, $\cC_c(r^*(\cC_0(Y))) = \cC_c(\cG(Y))$, and $\cC_0(Y) \rtimes \cG$ and  $\cC_0(Y) \rtimes_r \cG$ are canonically isomorphic to  $C^*(\cG(Y))$ and $C^*_{r}(\cG(Y))$ respectively.

(b) Let $B$ be a $C^*$-algebra and set $A = B \otimes\cC_0(X) $. Since $\cC_0(X)$ is a $\cG$-$C^*$-algebra, we see that  $A = B \otimes\cC_0(X) $ is a $\cG$-$C^*$-algebra, the action being trivial on $B$. Moreover,  $A\rtimes \cG$ and $A\rtimes_r \cG$ are canonically isomorphic to $B\otimes_{max}  C^*(\cG)$ and $B\otimes C^*_{r}(\cG)$ respectively.
\end{rems}

\section{Amenability and weak containment}

The reference for this section is \cite{AD-R}. The notion of amenable locally compact groupoid has many equivalent definitions. We will recall two of them. Before, let us recall a notation: given a locally compact groupoid $\cG$, $\gamma\in \cG$ and $\mu$ a measure on $\cG^{s(\gamma)}$, then $\gamma\mu$ is the measure on $\cG^{r(\gamma)}$ defined by 
$\int_{\cG^{r(\gamma)}} f \rd \gamma\mu = \int_{\cG^{s(\gamma)}} f(\gamma\gamma_1) \rd \mu(\gamma_1)$.

\begin{defn}\label{def:amen1}(\cite[Definitions 2.2.2, 2.2.8]{AD-R}) 
We say that $\cG$ is {\it amenable} if there exists a net $(m_i)$, where  $ m_i =(m_i^{x})_{x\in \cG^{(0)}}$ is a family of probability measures $m_i^{x}$ on $\cG^x$, such that
\begin{itemize}
\item[(i)] each $m_i$ is continuous in the sense that for all $f\in \cC_c(\cG)$, the function $x\mapsto \int f\rd m_i^{x}$ is continuous;
\item[(ii)] $\lim_{i} \norm{\gamma m_i^{s(\gamma)} - m_i^{r(\gamma)}}_1 = 0$ uniformly on the compact subsets of $\cG$.
\end{itemize}
\end{defn}
We say that $(m_i)_i$ is an {\it approximate invariant continuous mean} on $\cG$. Note that if $\cG$ is amenable and if $Y$ is a locally compact $\cG$-invariant subset of $X$, then the groupoid $\cG(Y)$ is amenable.

\begin{prop}\label{prop:amen3}{\rm(\cite[Proposition 2.2.13]{AD-R})} Let $(\cG,\lambda)$ be a locally compact groupoid with Haar system. Then $\cG$ is amenable if and only if there exists  a net $(g_i)$ of non-negative functions in $\cC_c(\cG)$ such that 
\begin{itemize}
\item[(a)] $\int g_i \rd\lambda^{x} \leq 1$ for every $x\in \cG^{(0)}$;
\item[(b)] $\lim_i \int g_i \rd\lambda^{x} = 1$ uniformly on the compact subsets of $\cG^{(0)}$;
\item[(c)] $\lim_i \int \abs{g_i(\gamma^{-1}\gamma_1) -  g_i(\gamma_1)}\rd\lambda^{r(\gamma)}(\gamma_1) = 0$ uniformly on the compact subsets of $ \cG$.
\end{itemize}
\end{prop}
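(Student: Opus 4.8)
The plan is to prove the equivalence between the approximate invariant continuous mean formulation of amenability (Definition \ref{def:amen1}) and the existence of a net $(g_i)$ in $\cC_c(\cG)$ satisfying conditions (a)--(c). I would proceed in both directions, with the forward direction (from an approximate invariant continuous mean to the functions $g_i$) being the more delicate one.

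\medskip

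\emph{From (a)--(c) to amenability.} This direction is the easier one. Given a net $(g_i)$ of non-negative functions in $\cC_c(\cG)$ satisfying (a)--(c), I would attempt to build an approximate invariant continuous mean by setting
$$m_i^x = \frac{g_i \rd\lambda^x}{\int g_i \rd\lambda^x}$$
whenever the denominator is nonzero (and interpolating or truncating where it is small). Condition (b) guarantees that the normalization is well-behaved uniformly on compact subsets of $\cG^{(0)}$, so that $m_i^x$ is eventually a genuine probability measure on $\cG^x$; condition (a) ensures the mass is controlled. The continuity requirement (i) of Definition \ref{def:amen1} follows from $g_i\in\cC_c(\cG)$ together with the Continuity axiom of the Haar system. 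The asymptotic invariance (ii) is where condition (c) enters: one translates the $L^1$-estimate $\int\abs{g_i(\gamma^{-1}\gamma_1)-g_i(\gamma_1)}\rd\lambda^{r(\gamma)}(\gamma_1)\to 0$, using the invariance property of $\lambda$, into the statement $\norm{\gamma m_i^{s(\gamma)}-m_i^{r(\gamma)}}_1\to 0$ uniformly on compacta, the error terms coming from the normalization being absorbed using (b).

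\medskip

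\emph{From amenability to (a)--(c).} This is the harder direction and I expect the main obstacle to lie here, specifically in passing from the \emph{measure-theoretic} data $(m_i^x)$ to genuine \emph{compactly supported continuous functions} $g_i$. Starting from an approximate invariant continuous mean $(m_i)$, the measures $m_i^x$ need not be absolutely continuous with respect to $\lambda^x$, so one cannot simply take Radon--Nikod\'ym densities. The standard device is a convolution/smoothing argument: one replaces $m_i$ by $m_i * \varphi_j$ for suitable approximate-identity functions $\varphi_j\in\cC_c(\cG)$ supported near the units, which produces densities against the Haar system while only slightly worsening the invariance estimate. The compact support of the resulting $g_i$ must be arranged by multiplying by cutoff functions on $\cG^{(0)}$ pulled back via $r$ (or $s$), again controlling the damage to conditions (a)--(c) using a diagonal argument over the index net. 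One then verifies that the convolution preserves near-invariance: the key computation estimates $\int\abs{g_i(\gamma^{-1}\gamma_1)-g_i(\gamma_1)}\rd\lambda^{r(\gamma)}(\gamma_1)$ in terms of $\norm{\gamma m_i^{s(\gamma)}-m_i^{r(\gamma)}}_1$ and the modulus of continuity of the smoothing kernel.

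\medskip

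The main technical obstacle throughout is keeping all three estimates (a)--(c) uniform on compact sets simultaneously, since the smoothing and the cutoff interact with the invariance. Rather than carry out this delicate approximation by hand, the most efficient route is to invoke the equivalences already established in \cite{AD-R}; indeed the cleanest proof simply cites \cite[Proposition 2.2.13]{AD-R}, where the passage between the various characterizations of amenability—continuous means, the functions $g_i$ of the $(2,1)$-type, and approximately invariant positive-type functions—is carried out in full generality for locally compact groupoids with Haar system.
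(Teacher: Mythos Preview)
The paper does not give a proof of this proposition at all: it is stated with an explicit attribution to \cite[Proposition 2.2.13]{AD-R} and left at that. Your proposal ends at the same place, since your final paragraph recommends invoking \cite[Proposition 2.2.13]{AD-R} rather than carrying out the approximation by hand; in that sense you agree with the paper's ``proof''.

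The sketch you give before that is a reasonable outline of the argument actually found in \cite{AD-R}: normalizing the $g_i$'s to get probability measures for the easy direction, and mollifying the measures $m_i^x$ by convolution with an approximate identity in $\cC_c(\cG)$ to obtain continuous compactly supported densities for the harder direction. One small point to watch in the easy direction: the definition requires each $m_i^x$ to be an honest probability measure for \emph{every} $x\in\cG^{(0)}$, not just for $x$ in a given compact set, so ``interpolating or truncating where it is small'' has to be made precise globally (e.g.\ replace $g_i$ by $g_i + \varepsilon_i h$ for a fixed strictly positive $h\in\cC_c(\cG)$ on the relevant region, or simply pass to a cofinal subnet and compact exhaustion). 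This is routine, and since the paper is content to cite the reference, your proposal is entirely adequate.
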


We will also need the notion of measurewise amenability.

\begin{defn}\label{def:meas_amen}{\rm(\cite[Proposition 3.2.14]{AD-R})} Let $(\cG,\lambda)$ be a locally compact groupoid. 
\begin{itemize}
\item[(i)] Let $\mu$ be a quasi-invariant measure on $X$. We say that {\it the measured groupoid $(\cG,\lambda,\mu)$ is amenable} if there exists  a net $(g_i)$ of $(\mu\circ\lambda)$-measurable non-negative functions on $\cG$ such that
\begin{itemize}
\item[(a)] $\int g_i \rd\lambda^{x} = 1$ for  {\it a.e.} $x\in \cG^{(0)}$;
\item[(b)] $\lim_i \int \abs{g_i(\gamma^{-1}\gamma_1) -  g_i(\gamma_1)}\rd\lambda^{r(\gamma)}(\gamma_1) = 0$ in the weak*-topology of $L^\infty(\cG,\mu\circ\lambda)$.
\end{itemize}
\item[(ii)]We say that $(\cG,\lambda)$ is {\it measurewise amenable} if $(\cG,\lambda,\mu)$ is 
an amenable measured groupoid for every quasi-invariant measure $\mu$.
\end{itemize}
\end{defn}

\begin{rem}\label{rem:meas_amen} An amenable groupoid is measurewise amenable. The converse is true for a large family of groupoids, for instance all \'etale groupoids and all locally compact groups (see \cite[Theorem 3.3.7, Remark 3.3.9, Examples 3.3.10]{AD-R}). 
\end{rem}

\begin{thm}\label{thm:equiv_amen} Let $(\cG,\lambda)$ be a locally compact groupoid. Consider the following conditions:
\begin{itemize}
\item[(a)] $(\cG,\lambda)$ is measurewise amenable;
\item[(b)] for every $\cG$-$C^*$-algebra $A$, the canonical surjection from $A\rtimes \cG$ onto $A\rtimes_r \cG$ is injective; 
\item[(c)] $C^*_{r}(\cG)$ is nuclear;
\item[(d)] the canonical surjection from $C^*(\cG)$ onto $C^*_{r}(\cG)$ is injective.
\end{itemize}
Then, we have (a)  $\Rightarrow$ (b)  $\Rightarrow$ (c), (b) $\Rightarrow$ (d). Moreover, if the isotropy groups $\cG(x)$ are discrete for every $x\in X$, then (c) $\Rightarrow$ (a).
\end{thm}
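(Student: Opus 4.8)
The plan is to fix a quasi-invariant measure $\mu$ throughout, to prove each statement first at the level of the measured groupoid $(\cG,\lambda,\mu)$, and then to quantify over all $\mu$ (recall that measurewise amenability in (a) means precisely that $(\cG,\lambda,\mu)$ is amenable for every such $\mu$). The backbone of the whole argument is the measured analogue of Hulanicki's theorem, available in \cite{AD-R}: a measured groupoid $(\cG,\lambda,\mu)$ is amenable if and only if the canonical surjection $C^*(\cG,\mu)\to C^*_{r}(\cG,\mu)$ is injective. The nontrivial half here is that weak containment over $\mu$ forces amenability; combined with Renault's easier converse (amenable $\Rightarrow$ weak containment), and taking the conjunction over all quasi-invariant $\mu$, this yields the equivalence (a) $\Leftrightarrow$ (b).

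For the remaining forward implications I would argue as follows. For (a) $\Rightarrow$ (c), measurewise amenability gives, for each $\mu$, injectivity of the regular von Neumann algebra $\pi_L(\cC_c(\cG))''$; assembling these statements over all quasi-invariant measures (equivalently, invoking the nuclearity theorem of \cite{AD-R}) shows that $C^*_{r}(\cG)$ is nuclear. For (a) $\Rightarrow$ (d), I would fix a $\cG$-$C^*$-algebra $A$ and disintegrate an arbitrary representation of $A\rtimes\cG$ over a quasi-invariant measure $\mu$; amenability of $(\cG,\lambda,\mu)$ then yields the crossed-product form of weak containment (the full and reduced crossed-product norms attached to $\mu$ coincide), and taking the supremum over $\mu$ shows that $A\rtimes\cG\to A\rtimes_{r}\cG$ is isometric, hence injective. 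Finally (d) $\Rightarrow$ (e) is immediate: specialising to $A=\cC_0(X)$ and using Remark~\ref{rem:cross_P}(a) with $Y=X$ identifies $A\rtimes\cG$ with $C^*(\cG)$ and $A\rtimes_{r}\cG$ with $C^*_{r}(\cG)$, so the surjection in (d) becomes the one in (e).

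The substantial point is the converse (c) $\Rightarrow$ (a) under the discreteness hypothesis on the isotropy. Here I would again fix a quasi-invariant $\mu$ with closed support $F$. Since the restriction map realises $C^*_{r}(\cG(F))$ as a quotient of $C^*_{r}(\cG)$ and nuclearity passes to quotients, $C^*_{r}(\cG,\mu)\cong C^*_{r}(\cG(F))$ is nuclear; by the Choi--Effros characterisation its bidual, and hence the weak closure $\pi_L(\cC_c(\cG))''$ of its image in the regular representation, is an injective von Neumann algebra. It then remains to pass from injectivity of this von Neumann algebra back to amenability of the measured groupoid $(\cG,\lambda,\mu)$, which is the content of the measured equivalence between injectivity and amenability in \cite{AD-R}. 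This last step is exactly where discreteness of the isotropy groups is used: it lets one control the isotropy subquotients $C^*_{r}(\cG(x))$ --- each a reduced group $C^*$-algebra, nuclear by restriction, so that the discrete group $\cG(x)$ is amenable by Lance's theorem --- and thereby remove the isotropy obstruction that otherwise separates injectivity of the von Neumann algebra from amenability of the groupoid. I expect this transfer, from a $C^*$/von Neumann statement at each $\mu$ to genuine amenability of the measured groupoid with the isotropy handled through discreteness, to be the main obstacle; the other implications are essentially bookkeeping built on the cited results of \cite{AD-R}.
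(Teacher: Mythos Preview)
Your proposal is correct and matches the paper's approach: the paper does not give an independent proof of this theorem at all, but simply cites \cite[Theorem 6.1.4]{AD-R} for (a) $\Leftrightarrow$ (b), \cite[Corollary 6.2.14]{AD-R} for (a) $\Rightarrow$ (c) and for (c) $\Rightarrow$ (a) under discrete isotropy, and \cite[Theorem 3.6]{Ren91} or \cite[Proposition 6.1.10]{AD-R} for (a) $\Rightarrow$ (d). Your sketch accurately unpacks what those cited arguments amount to (Hulanicki-type equivalence over each $\mu$, injectivity of the regular von Neumann algebra, disintegration of crossed-product representations, and Lance's theorem at the isotropy groups), so you have in fact written more than the paper does.
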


 \begin{proof} For the proof of (a) $\Rightarrow$ (b) see \cite[Theorem 3.6]{Ren91} or \cite[Proposition 6.1.10]{AD-R}. That  (c) $\Rightarrow$ (a) when the isotropy is discrete  is contained in \cite[Corollary 6.2.14]{AD-R}. To show that (b) $\Rightarrow$ (c), we take a $C^*$-algebra $A$ and the trivial action of $\cG$ on $\cC_0(X)\otimes A$ (where $X = \cG^{(0)}$).  Assuming that (c) holds true we have
$$A\otimes C^*_{r}(\cG) = (A\otimes\cC_0(X))\rtimes_r \cG =  (A\otimes \cC_0(X))\rtimes \cG = A\otimes_{max} C^*(\cG) = A\otimes_{max} C^*_{r}(\cG).$$
\end{proof}

\begin{defn}\label{def:WC} We say that a locally compact groupoid $(\cG,\lambda)$ has the {\it weak containment property} if Condition (d) of the previous theorem is fulfilled.
\end{defn}

Every amenable locally compact groupoid has the weak containment property. We study the converse in the next section.
\section{Weak containement vs amenability}\label{sect:WCamen}

\subsection{Some results} We first sum up the known results where weak containment implies amenability.

\begin{prop}$($\cite{bun}$)$ Let $(\cG,\lambda)$ be a transitive locally compact groupoid with Haar system having the weak containment property. Then $\cG$ is amenable.
\end{prop}

This fact can also been shown using the notion of equivalence of groupoids. Recall that a groupoid $\cG$ is transitive if for every $x,y\in X = \cG^{(0)}$ there exists $\gamma \in \cG$ with $r(\gamma) = x$ and $s(\gamma) = y$. Given $x\in X$, the groupoids $\cG$ and $\cG(x)$ are equivalent (see \cite{MRW}). Moreover the weak containment property and amenability are preserved under equivalence (see \cite{SW} for the first property and \cite{AD-R} for the latter one). Then we apply Hulanicki's result for groups.

\begin{thm}$($\cite{Mat}, \cite{BEW}$)$ Let $G\actson X$ be an action of a {\it \bf discrete exact} group on a locally compact group. Assume that the transformation groupoid $G\ltimes X$ has the weak containment property. Then $G\ltimes X$ is amenable
\end{thm}

\begin{ex}Another example where the weak containment property implies amenability concerns the natural action of a discrete group $G$ on its boundary $\partial G= \beta G\setminus G$.  Indeed, from the commutativity of the diagram
$$\xymatrix{
0\ar[r] &C^*(G\ltimes G) \ar@{=}[d] \ar[r] & C^*(G\ltimes \beta G)\ar[d] \ar[r] &C^* (G\ltimes \partial G)
\ar[r]\ar[d]^{\simeq} &0\\
0\ar[r] & C^*_{r}(G\ltimes G) \ar[r] & C^*_{r}(G\ltimes \beta G)\ar[r]&  C^*_r (G\ltimes \partial G)
\ar[r] &0}$$
since the first line is an exact sequence the second line is exact also. Roe and Willett proved in \cite{RoeW} that this exactness property implies that $G$ has Yu's property A and thus is exact. Therefore the action of $G$ on $\partial G$ is amenable by the previous theorem.
\end{ex}

\noindent{\bf Problem.} If $G\actson \beta G$ has the weak containment property, is it true that the action is amenable and thus that $G$ is exact? (see \cite[page 12]{BEW18}).

\subsection{The case of bundles of groups}

Let $\cG$ be a  groupoid group bundle over $X = \cG^{(0)}$. For $x\in X$, we rather denote by $G(x)$ the group $\cG(x)$.
 We set $U_x= X\setminus\set{x}$ and denote by $\cG(U_x)$ the subgroupoid of those $\gamma\in \cG$ such that $r(\gamma) \in U_x$. Let $\pi_x$ be the canonical surjective map from $C^*_{r}(\cG)$ onto $C^*_{r}(G(x))$.
\vspace{0.2cm}

\begin{prop}\label{prop:WC-amen}  Let $\cG$ be a groupoid group bundle and let us consider  the following conditions:\begin{itemize} 
\item[(1)] $\cG$ is amenable;
\item[(2)] for every $a\in C^*_{r}(\cG)$ the function $x \mapsto \norm{\pi_x(a)}$ is continuous;
\item[(3)] the sequence $0\to C^*_{r}(\cG(U_x)) \to  C^*_{r}(\cG) \to C^*_r (G(x)) \to 0$ is exact for every $x\in X$.
\end{itemize}
Then we have (1) $\Rightarrow$ (2) $\Leftrightarrow$ (3). Moreover these three conditions are equivalent when $\cG$ has the weak containment property.
\end{prop}

\begin{proof}
 (1) $\Rightarrow$ (3). Assume that $\cG$ is amenable. Then we have $C^*_{r}(\cG) = C^*(\cG)$. Moreover the groupoid $\cG(U_x)$ and the group $G(x)$ are also amenable and therefore their  reduced and full $C^*$-algebras coincide. It follows that the sequence in  (3) is exact. 
 
 Let us prove the equivalence between (2) and (3).  We know that 
 $$(C^*_{r}(\cG), \set{\pi_x : C^*_{r}(\cG) \to C^*_r (G(x))}_{x\in X}, X)$$
  is a field of $C^*$-algebras on $X$, which is lower semi-continuous in the sense that $x\mapsto \norm{\pi_x(a)}$ is  lower semi-continuous for every $a\in C^*_{r}(\cG)$ (see for instance \cite[Theorem 5.5]{LR}). On the other hand, $C^*_{r}(\cG)$ is a $\cC_0(X)$-algebra. Indeed, for $f\in \cC_0(X)$,  $g \in \cC_c(\cG)$  and $\gamma\in \cG$, we set $(fg)(\gamma) = f\circ r(\gamma) g(\gamma)$. The map $g\mapsto fg$ extends continuously in order to define a structure of $\cC_0(X)$-algebra on $C^*_{r}(\cG)$. We have $\cC_c(\cG(U_x)) = \cC_x(X)\cC_c(\cG)$ and by continuity we get $C^*_{r}(\cG(U_x)) = \cC_{x}(X)C^*_{r}(\cG)$. Note that for $f\in \cC_0(X)$, $a\in C^*_{r}(\cG)$ and $x\in X$, we have $\pi_x(fa) = f(x) \pi_x(a)$. It follows from \cite[Lemma 2.3]{KW95} that the function $x\mapsto \norm{\pi_x(a)}$ is upper semi-continuous at $x_0$ for all $a\in C^*_{r}(\cG)$ if and only if the kernel of $\pi_{x_0}$ is $C^*_{r}(\cG(U_{x_0}))$.
 
 Assume now that $\cG$ has the weak containment property.
 For every $x\in X$ the following diagram is commutative
 $$\xymatrix{
0\ar[r] &C^*(\cG(U_x)) \ar[d] \ar[r] & C^*(\cG)\ar[d]^{\lambda} \ar[r] &C^* (G(x))
\ar[r]\ar[d]^{\lambda_x} &0\\
0\ar[r] & C^*_{r}(\cG(U_x)) \ar[r] & C^*_{r}(\cG)\ar[r]^{\pi_x}&  C^*_r (G(x))
\ar[r] &0}$$
where the first line is exact and $\lambda$ is an isomorphism. By a straightforward diagram chasing we see  that $\lambda_x$ is injective ({\it i.e.}, $G(x)$ is amenable) if and only if the second line is exact.  Moreover the groupoid group bundle $\cG$ is amenable if and only if $G(x)$ is amenable for every $x\in X$ (see \cite[Example 5.1.3. (1)]{AD-R}).
\end{proof}

 Thus the natural question is whether the condition (3) of the previous proposition always holds when $\cG$ has the weak containment property. The  nice example \ref{ex:Wil} below  of Willett \cite{Wil15} shows that the answer is no. It is a well chosen  HLS groupoid (see Example \ref{ex:HLS}).

Let us keep the  notation of  Example \ref{ex:HLS}. In the case of HLS groupoids note that we have the following result.

\begin{prop}\label{prop:HLS}  We assume that $\Gamma$ is finitely generated. Then the following conditions are equivalent:
\begin{itemize}
\item[(1)] $\Gamma$ is amenable;
\item[(2)] $\cG$ is amenable;
\item[(3)]  $C^*_{r}(\cG)$ is a continuous field of  $C^*$-algebras with fibres $C^*_{r}(\cG(x))$;
\item[(4)] the sequence
$0 \longrightarrow C^*_{r}(\cG(\N))\longrightarrow C^*_{r}(\cG) \longrightarrow C^*_{r}(\cG(\infty)) \longrightarrow 0$
is exact.
\end{itemize}
\end{prop}

\begin{proof} Note that the fibres $\cG(k)$ of the groupoid group bundle $\cG$ are finite groups if $k\in \N$, and that $\cG(\infty) = \Gamma$. Therefore (1) $\Leftrightarrow$ (2) $\Rightarrow$ (3) $\Leftrightarrow$ (4) follows from the proposition \ref{prop:WC-amen}. That (3) $\Rightarrow$ (1) is proved in \cite[Proposition 9.14]{AD}.
\end{proof}
 
\begin{ex}\label{ex:Wil} There exist HLS groupoids that have the weak containment property and are not amenable, that is with $\Gamma$ non-amenable. Indeed, Willett takes $\Gamma$ to be the free group with two generators, and constructs a decreasing sequence $\Gamma \supset N_0 \supset N_1 \cdots \supset N_k \supset \cdots$ of finite index normal subgroups of $\Gamma$ with $\cap_k N_k = \set{e}$ such the correspondinding HLS groupoid has the weak containment property.
\end{ex}

\subsection{About the role of exactness} We see in the above examples that when knowing {\it a priori} some exactness property of the \'etale groupoid it is possible to show that its weak containment property implies the amenability of the groupoid. We would like to understand to what extent this is a more general fact. Let us give first some definitions.

\begin{defn}\label{def:reduction}We say that an action of  a locally compact groupoid $(\cG,\lambda)$ on a $C^*$-algebra $A$ is {\it inner exact} if for every $\cG$-invariant closed ideal $I$ of $A$ the sequence 
$$0\longrightarrow I\rtimes_r \cG \longrightarrow  A\rtimes_r \cG\longrightarrow   (A/I)\rtimes_r \cG \longrightarrow 0$$
is exact.
 We say that $\cG$ is {\it inner exact}  if the canonical action of $\cG$ on $\cC_0(\cG^{(0)})$ is inner exact, {\it i.e.,} if for every  invariant closed subset $F$ of $\cG^{(0)}$, the  sequence 
$$0\longrightarrow C^*_{r}(\cG(U)) \longrightarrow C^*_{r}(\cG)\longrightarrow  C^*_{r} (\cG(F))\longrightarrow 0$$
is exact, where $U = X\setminus F$.
\end{defn}

The term ``inner'' in the above definitions aims to highlight that we only consider short sequences with respect to the specific given action of the groupoid. A possible definition of exactness for a groupoid is the following one. Other candidates are considered in \cite{AD}.

\begin{defn}\label{def:KWexact} We say that a groupoid $(\cG,\lambda)$ is {\it exact in the sense of Kirchberg-Wassermann} (or KW-{\it exact} in short) if  every action of $(\cG,\lambda)$ on any $C^*$-algebra $A$ is inner exact.
\end{defn}

This notion was studied by Kirchberg and Wassermann for locally compact groups. They proved in particular that this property is equivalent to the exactness of $C^*_{r}(G)$ for a discrete group $G$. 

\begin{exs}\label{ex:inner_exact} (a) Amenable locally compact groupoids are KW-exact since for them the reduced and full crossed products relative to any action are the same.

(b) Every minimal groupoid is inner exact. In particular, every locally compact group is inner exact.

(c) Every KW-exact groupoid is inner exact.

(d) Let $G$ be a locally compact KW-exact group acting to the right on a locally compact space $X$. Then the transformation groupoid $\cG= X\rtimes G$ is KW-exact. Indeed let $\alpha$ be an action of $\cG$ on a $\cC_0(X)$-algebra $A$. Then $G$ acts on $A$ by $(\beta_g a)(x)  = \alpha_{(x,g)}(a(xg))$ and it is straightforward to check that
$A\rtimes_{\beta,r} G$ is canonically isomorphic to $A\rtimes_{\alpha,r} \cG$. Moreover, this identification is functorial.

In fact the groupoid is exact in a very strong sense. Indeed, $G$ acts amenably on a compact space $Y$ \footnote{In the non discrete case this fact is proved in a recent preprint of Brodzki, Cave and Kang Li \cite{BCL}.} and therefore it acts amenably on $Y\times X$ by $(y,x)g = (yg,xg)$ (see \cite[Proposition 2.2.9]{AD-R}). Then $\cG =X\rtimes G$ acts amenably on $Y\times X$ by $(y,x)(x,g) = (yg,xg)$ and the momentum map $(y,x) \mapsto x\in \cG^{(0)}$ is proper\footnote{This property holds true more generally for every partial transformation groupoid relative to a partial action of an exact group (see \cite[Proposition 4.18]{AD}).}. These facts imply that $\cG$ is KW-exact. The proof is the same as the proof  showing that a group acting amenably on a compact space is KW-exact (see for instance \cite[Theorem 7.2]{AD02}).More details on the notion of exactness for groupoids are given in \cite{AD}.

\end{exs}

\begin{rem}\label{rem:main} Let $(\cG,\lambda)$ be  an amenable locally compact groupoid. Then  $C^*(\cG) = C^*_{r}(\cG)$ and $\cG$ is inner exact. In the first version of this paper, we had stated that the converse is true, but our proof has a fatal error. However, we still believe that the result holds under some appropriate notion of exactness.
\end{rem}

\section{Semigroup $C^*$-algebras}\label{sect:semi}

\subsection{Semigroups}\label{sec:sg} We will consider two kinds of semigroups: inverse semigroups and sub-semi\-groups of a group.

An {\it inverse semigroup} $S$ is a semigroup such that for every $s\in S$ there exists a unique element $s^*$ such that $ss^*s = s$ and $s^*ss^* = s^*$. Our references for this notion are \cite{Pat,Law}. Note that groups are inverse semigroups with exactly one idempotent. The set $E_S$ of idempotents of $S$ plays a crucial role. It is an abelian sub-semigroup of $S$.  On $S$ one defines the equivalence relation $s\underset{\sigma}{\sim} t$ if there exists an idempotent $e$ such that $se = te$.
The quotient $S/\sigma$ is a group, called the {\it maximal group homomorphism image} of $S$, since every homomorphism from $S$ into a group $G$ factors through $S/\sigma$. This group $S/\sigma$ is trivial when $S$ has a zero element $0$, which is a frequent situation.

 By an abuse of notation,  $\sigma$ will also denote the quotient map from $S$ onto $S/\sigma$. If $S$ has a zero, we denote by $S^\times$ the set $S\setminus\set{0}$. When $S$ does not have a zero, we set $S^\times = S$.
 
Given a set $X$, we denote by $\IS(X)$ the inverse semigroup of partial bijections of $X$. Its zero element $0$ is the application with empty domain. The Wagner-Preston theorem \cite[Proposition 2.1.3]{Pat} identifies any inverse semigroup $S$ with a sub-semigroup of $\IS(S)$.

Let $G$ be a group and $P$ a sub-semigroup of $G$ containing the unit $e$. The {\it left inverse hull} $S(P)$ of $P$ is the inverse sub-semigroup of $\IS(P)$ generated by the injection $\ell_p : x\mapsto px$. It has a unit, namely $\ell_e$. Observe that $\ell_p^{*}$ is the map $px \mapsto x$ defined on $pP$. Every  element of $S(P)$ is of the form $s= \ell_{p_1}^*\ell_{q_1}\cdots \ell_{p_n}^*\ell_{q_n}$ with $p_i,q_i\in P$ and $n\geq 1$. Let us recall some important properties of $S(P)$. 

\begin{prop}\label{prop:Eunit} Let $(P,G)$ as above. Then
\begin{itemize}
\item[(1)] $0\not\in S(P)$ if and only if $PP^{-1}$ is a subgroup of $G$;
\item[(2)]  The function $\psi : S(P)^\times \to G$ sending $s=  \ell_{p_1}^*\ell_{q_1}\cdots \ell_{p_n}^*\ell_{q_n}$ to $p_{1}^{-1}q_1 \cdots p_{n}^{-1}q_n$ is well defined. It satisfies $\psi(st) = \psi(s)\psi(t)$ if $st\not=0$ and we have $\psi^{-1}(e) = E_{S(P)}^\times$.
\end{itemize}
\end{prop}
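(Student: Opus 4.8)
The whole plan rests on one structural observation: writing $s=\ell_{p_1}^*\ell_{q_1}\cdots\ell_{p_n}^*\ell_{q_n}\in S(P)$, each generator acts on $P$ by $\ell_q:x\mapsto qx$ and $\ell_q^*:qx\mapsto x$, so $s$ is the partial bijection $x\mapsto (p_1^{-1}q_1\cdots p_n^{-1}q_n)x$, defined exactly on the set of $x\in P$ for which every successive partial product stays in $P$. In other words, \emph{every element of $S(P)$ is a partial left translation of $P$ by a fixed element of $G$}, on a suitable domain. Both parts flow from this.

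I would first dispatch part (2). For nonzero $s$ the domain is nonempty, and choosing any $x$ in it, the identity $s(x)=(p_1^{-1}q_1\cdots p_n^{-1}q_n)x$ determines the group element $\psi(s)$ uniquely by right cancellation in $G$, independently of the chosen word; this is exactly why $\psi$ is well defined on $S(P)^\times$ and cannot be defined at $0$. Multiplicativity is then immediate: if $st\neq 0$, pick $x\in\mathrm{dom}(st)$ and compute $st(x)=s(t(x))=\psi(s)\psi(t)x$, whence $\psi(st)=\psi(s)\psi(t)$. For the last assertion I would recall that the idempotents of $\IS(P)$ are precisely the partial identities $\mathrm{id}_A$; a nonzero $s$ is idempotent iff it fixes every point of its domain iff $\psi(s)=e$, giving $\psi^{-1}(e)=E_{S(P)}^\times$.

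For part (1) I would first record the elementary computation that $\ell_a^*\ell_b$ has domain $\set{x\in P:bx\in aP}$, which is nonempty iff $aP\cap bP\neq\emptyset$, and that $aP\cap bP\neq\emptyset\iff a^{-1}b\in PP^{-1}$. Since $PP^{-1}$ is always symmetric and contains $e$ (as $e\in P$), it is a subgroup iff it is closed under multiplication; a short computation using $PP\subseteq P$ shows this is equivalent to $P^{-1}P\subseteq PP^{-1}$, i.e. to $aP\cap bP\neq\emptyset$ for all $a,b\in P$ (right reversibility). The forward implication of (1) is then the contrapositive: if $PP^{-1}$ is not a subgroup, there are $a,b$ with $aP\cap bP=\emptyset$, so $\ell_a^*\ell_b=0\in S(P)$.

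The converse, that $PP^{-1}$ a subgroup forces $0\notin S(P)$, is the main obstacle, because I must control the domains of arbitrary products rather than two-term ones. Here I would pass to the semilattice of idempotents: $0\in S(P)$ iff the empty set occurs among the domains and ranges of elements of $S(P)$, and these subsets form the family $\mathcal J$ of constructible right ideals, generated from $P$ by the operations $A\mapsto qA$, $A\mapsto q^{-1}A\cap P$, and finite intersection. The plan is to prove by induction along these operations that, under reversibility, every member of $\mathcal J$ contains a principal right ideal $zP$ and is hence nonempty. The key inductive inputs are that reversibility upgrades from pairs to finite families (if $z\in pP\cap qP$ then $zP\subseteq pP\cap qP$, so all finite intersections of principal right ideals are nonempty) and that $q^{-1}(zP)\cap P\supseteq aP$ whenever $a\in P$ satisfies $qa\in zP$. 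This yields $\emptyset\notin\mathcal J$, hence $0\notin S(P)$, completing the equivalence.
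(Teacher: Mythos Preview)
Your proof is correct and complete. The paper itself does not give a detailed argument: it simply observes that $PP^{-1}$ being a subgroup is equivalent to the condition $pP\cap qP\neq\emptyset$ for all $p,q\in P$, and then defers both parts to \cite{Nor14} (Lemma~3.4.1 and Proposition~3.2.11). Your argument is essentially the proof one finds behind those citations: for (2) you exploit that every nonzero element of $S(P)$ acts as a partial left translation by a uniquely determined element of $G$, and for the harder direction of (1) you run an induction over the constructible right ideals showing each contains a principal right ideal $zP$, hence is nonempty. One small terminological point: the condition $aP\cap bP\neq\emptyset$ for all $a,b\in P$ is conventionally (and in this paper) called \emph{left} reversibility, not right.
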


\begin{proof} $PP^{-1}$ is a subgroup of $G$ if and only if $pP\cap qP \not =\emptyset$ for every $p,q\in P$ ({\it i.e.,} $P$ is {\it left reversible}). Then, assertion (1) is Lemma 3.4.1 of \cite{Nor14}.

(2) is proved in \cite[Proposition 3.2.11]{Nor14}.
\end{proof}

Recall that on an inverse semigroup $S$, a partial order is defined as follows: $s\leq t$ if there exists an idempotent $e$ such that $s = te$ (see \cite[page 21]{Law} for instance).

\begin{defn}\label{def:idpure1} An inverse semigroup $S$ is said to be {\it $E$-unitary} if $E_S$ is the kernel of $\sigma : S\to S/\sigma$ (equivalently, every element greater than an idempotent is an idempotent). When $S$ has a zero, this means that $S= E_S$.
\end{defn}

\begin{defn}\label{def:idpure} Let $S$ be an inverse semigroup. A {\it morphism} (or {\it grading}) is a function $\psi$ from $S^\times$ into a group $G$ such that $\psi(st) = \psi(s)\psi(t)$ if $st\not=0$. If in addition  $\psi^{-1}(e) = E_{S}^\times$, we say that $\psi$ is  an {\it idempotent pure morphism}.  When such a function $\psi$ from $S^\times$ into a group $G$ exists, the inverse semigroup $S$ is called {\it strongly $E^*$-unitary}. 
\end{defn}

Note that when $S$ is without zero, $S$ is strongly $E^*$-unitary if and only if it is $E$-unitary.

\begin{rem}\label{rem:Eunit} Let $(P,G)$ with $G = PP^{-1}$. Then the map $\tau : S(P)/\sigma \to G$ such that $\tau\circ \sigma = \psi$ is an isomorphism. Indeed $\psi$ is surjective, so $\tau$ is also surjective. Assume that $\tau(\sigma(x)) = e$, with $x\in S(P)$. Since $\psi$ is idempotent pure, we see that $x$ is an idempotent and therefore $\sigma(x)$ is the unit of $S(P)/\sigma$.
\end{rem}

In \cite{Nica94}, Nica has introduced the Toeplitz inverse semigroup $S(G,P)$ which is the inverse sub-semigroup of $\IS(P)$ generated by the maps $\alpha_g : g^{-1}P \cap P \to P \cap gP$, $g\in G$, where $\alpha_g(x) = gx$ if $x \in g^{-1}P \cap P$.  For $p\in P$ we have $\alpha_p =\ell_p$. Therefore we have $S(P)\subset S(G,P)$. 

\begin{defn}\label{def:Toeplitz}We say that $(P,G)$ satisfies the {\it Toeplitz condition} if $S(P) = S(G,P)$.
\end{defn}

We will give in Section \ref{sect:alg_sg} another characterization of the Toeplitz condition, along with examples.

\begin{prop}\label{prop:Toeplitz} Assume that $(P,G)$ satisfies the Toeplitz condition. Let $\psi : S(P)^\times \to G$ as defined in Proposition \ref{prop:Eunit}. Then, $\alpha_g \not= 0$ if and only if $g$ is in the image of $\psi$. In this case we have  $\psi(\alpha_g) = g$, and $\alpha_g$ is the greatest element of $\psi^{-1}(g)$.
\end{prop}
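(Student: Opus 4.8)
The plan is to reduce all three assertions to a single structural fact, which I will isolate as the key lemma: \emph{every nonzero $s\in S(P)$ acts on its domain as left translation by $\psi(s)$}, i.e. $s(x)=\psi(s)\,x$ for every $x$ in the domain of $s$. I would prove this by induction on the length of a word $s=\ell_{p_1}^*\ell_{q_1}\cdots\ell_{p_n}^*\ell_{q_n}$ representing $s$. The base cases are read off directly from the formula of Proposition \ref{prop:Eunit}: writing $\ell_q=\ell_e^*\ell_q$ gives $\psi(\ell_q)=q$ and $\ell_q$ sends $x$ to $qx$, while writing $\ell_p^*=\ell_p^*\ell_e$ gives $\psi(\ell_p^*)=p^{-1}$ and $\ell_p^*$ sends $y$ to $p^{-1}y$. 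For the inductive step, write $s=tu$ with $t,u$ shorter; since $s\neq 0$ forces $t\neq 0$ and $u\neq 0$, the partial-homomorphism property $\psi(s)=\psi(t)\psi(u)$ applies, and for $x$ in the domain of $s$ one computes $s(x)=t(u(x))=\psi(t)\,u(x)=\psi(t)\psi(u)\,x=\psi(s)\,x$.

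Granting this lemma, the three claims follow quickly. The Toeplitz condition gives $\alpha_g\in S(G,P)=S(P)$, so $\psi(\alpha_g)$ is defined whenever $\alpha_g\neq 0$. If $\alpha_g\neq 0$, pick $x$ in its domain $g^{-1}P\cap P$; then $gx=\alpha_g(x)=\psi(\alpha_g)\,x$ by the lemma, and cancelling $x$ in $G$ yields $\psi(\alpha_g)=g$. This proves the second assertion and shows $g\in\psi(S(P)^\times)$. Conversely, if $g=\psi(s)$ for some $s\in S(P)^\times$, then $s$ has nonempty domain, and for $x$ in it one has $s(x)=gx\in P$ while $x\in P$, so $x\in g^{-1}P\cap P$ and hence $\alpha_g\neq 0$. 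Together these give the first assertion.

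For the last assertion I would use the natural order $s\le t\iff s=te$ for some idempotent $e$, which in $\IS(P)$ means that $t$ restricts to $s$. Given any $s\in\psi^{-1}(g)$, the lemma shows $s(x)=gx\in P$ for each $x$ in the domain of $s$, so every such $x$ lies in $g^{-1}P\cap P$ and $\alpha_g(x)=gx=s(x)$; thus $\alpha_g$ extends $s$, concretely $s=\alpha_g(s^*s)$ with $s^*s$ idempotent, giving $s\le\alpha_g$. Since $\alpha_g\in\psi^{-1}(g)$ by the first two assertions, it is the greatest element. The only real subtlety—and the main obstacle—is the key lemma itself: one must verify that a nonzero partial bijection assembled from the $\ell_p$ and $\ell_p^*$ collapses to a single left translation, and in particular that the inductive bookkeeping never meets the zero element (which it does not, since a product in $\IS(P)$ vanishes as soon as one factor does). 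Everything else is routine manipulation of domains inside $P$.
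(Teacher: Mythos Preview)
Your argument is correct. The key lemma---that every nonzero $s\in S(P)$ acts on its domain as left translation by $\psi(s)$---is exactly the right structural fact, and your induction on word length establishes it cleanly; the three assertions of the proposition then follow by the short domain computations you give. One small remark: the key lemma is essentially equivalent to the well-definedness of $\psi$ asserted in Proposition~\ref{prop:Eunit}(2), so you are in effect unpacking that statement rather than introducing new content, but this is entirely appropriate here.

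The paper itself does not give a proof: it simply cites \cite[Proposition~4.1]{Nor15} and \cite[Lemma~3.2]{Nica94}. Your self-contained argument is the standard one and matches what those references do, so you have supplied the details the paper omits.
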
 

For the proof, see \cite[Proposition 4.1]{Nor15} or \cite[Lemma 3.2]{Nica94}.

\subsection{Groupoid associated with an inverse semigroup} 

Let $S$ be an inverse semigroup. We recall the construction of the associated groupoid $\cG_S$ that is described in detail in \cite{Pat}. We denote by $X$ the space of non-zero maps $\chi$ from $E_S$ into $\set{0,1}$ such that $\chi(ef) =\chi(e)\chi(f)$ and $\chi(0) = 0$ whenever $S$ has a zero.  Equipped with the  topology induced from the product space 
$\set{0,1}^E$, the space  $X$,  called the {\it spectrum} of $S$, is  locally compact and  totally disconnected. Note that when $S$ is a monoid ({\it i.e.,} has a unit element $1$) then $\chi$ is non-zero if and only if $\chi(1) = 1$, and therefore $X$ is compact.  

The semigroup $S$ acts on $X$ as follows. The domain (open and compact) of $t\in S$ is $D_{t^*t} = \set{\chi\in X: \chi(t^*t) = 1}$ and we set $\theta_t(\chi)(e) = \chi(t^*et)$.  We define on $\Xi = \set{(t,\chi)\in S\times X: \chi\in D_{t^*t}}$ the equivalence relation
$(t,\chi)\sim (t_1,\chi_1)$ if $\chi = \chi_1$ and there exists $e\in E_S$ with $\chi(e) = 1$ and $te= t_1e$.
Then $\cG_S$ is the quotient of $\Xi$ with respect to this equivalence relation, equipped with the quotient topology. The range of the class $[t,\chi]$ of $(t,\chi)$ is $\theta_t(\chi)$ and its source is $\chi$. The composition law is given by $[u,\chi][v, \chi'] = [uv,\chi']$ if $\theta_v(\chi')= \chi$ (see \cite{Pat} or \cite{Exel} for details). In general, $\cG_S$ is not Hausdorff. But for the inverse semigroups we are interested in, like $S(P)$, we will see that the quotient topology is Hausdorff.

\begin{prop}\label{prop:strongEunit} Let $S$ be a strongly $E^*$-unitary inverse semigroup, and let $\psi : S^\times \to G$ be an idempotent pure morphism. Then there is a partial action of $G$ on the spectrum $X$ of $S$ such that the groupoid $\cG_S$ is topologically isomorphic to the groupoid $G\ltimes X$ associated with the partial action. In particular, $\cG_S$ is Hausdorff and \'etale. Moreover, $X$ is compact when $S$ has a unit.
\end{prop}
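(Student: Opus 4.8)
The plan is to construct the partial action of $G$ on $X$ directly from the idempotent pure morphism $\psi$, and then to exhibit an explicit homeomorphism between $\cG_S$ and $G\ltimes X$. Recall that $\psi^{-1}(g)$, when non-empty, is a subset of $S^\times$ closed under the partial order and (by idempotent purity) the preimage $\psi^{-1}(e)$ is exactly $E_S^\times$. For each $g\in G$, I would set $U_g = \bigcup_{\psi(t)=g} D_{tt^*}$, an open subset of $X$, and define $\beta_g : U_{g^{-1}} \to U_g$ by $\beta_g(\chi) = \theta_t(\chi)$ whenever $\chi \in D_{t^*t}$ with $\psi(t) = g$. The first key step is to check this is well defined: if $\psi(s) = \psi(t) = g$ and $\chi$ lies in both $D_{s^*s}$ and $D_{t^*t}$, then $\psi(s^*t) = e$, so $s^*t \in E_S^\times$, and a short computation using $\chi(s^*t) $-type relations forces $\theta_s(\chi) = \theta_t(\chi)$ on the relevant idempotents. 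Verifying $\beta_e = \Id_X$ and the compatibility $\beta_g\beta_h \le \beta_{gh}$ (Examples \ref{exs:groupoids}(d)) then follows from $\psi(st) = \psi(s)\psi(t)$ for $st\neq 0$ together with $\theta_{st} = \theta_s\theta_t$ on the appropriate domains.

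Next I would define the candidate isomorphism $\Phi : \cG_S \to G\ltimes X$ by $\Phi([t,\chi]) = (\psi(t), \chi)$ for $(t,\chi) \in \Xi$. The second key step is that $\Phi$ respects the equivalence relation defining $\cG_S$: if $(t,\chi)\sim(t_1,\chi)$ via an idempotent $e$ with $\chi(e)=1$ and $te = t_1 e$, then $t^*t_1 \geq e^* e = e$ lies above a non-zero idempotent at $\chi$, and idempotent purity gives $\psi(t) = \psi(t_1)$, so $\Phi$ descends to the quotient. I would then check that $\Phi$ intertwines the groupoid structures: source and range match because $s([t,\chi])=\chi = s(\psi(t),\chi)$ and $r([t,\chi]) = \theta_t(\chi) = \beta_{\psi(t)}(\chi) = r(\psi(t),\chi)$, and the product formula $[u,\chi][v,\chi'] = [uv,\chi']$ corresponds to $(\psi(u),\chi)(\psi(v),\chi') = (\psi(u)\psi(v),\chi')$.

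To see $\Phi$ is a bijection I would construct the inverse explicitly. Given $(g,\chi) \in G\ltimes X$, so $\chi \in U_{g^{-1}}$, pick any $t$ with $\psi(t)=g$ and $\chi \in D_{t^*t}$ and send $(g,\chi)$ to $[t,\chi]$; well-definedness is precisely the computation already used for $\beta_g$, combined with the injectivity argument, showing that the class $[t,\chi]$ is independent of the chosen representative $t$. Injectivity of $\Phi$ comes from the same idempotent pure relation run in reverse: if $\psi(t)=\psi(t_1)$ and $\chi\in D_{t^*t}\cap D_{t_1^*t_1}$, then $t$ and $t_1$ agree below a non-zero idempotent at $\chi$, which is exactly the condition $(t,\chi)\sim(t_1,\chi)$. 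Finally, since both $\cG_S$ and $G\ltimes X$ carry quotient/subspace topologies from $S\times X$ and $G\times X$ respectively, and $\psi$ is locally constant on the discrete factor, $\Phi$ and its inverse are continuous, hence $\Phi$ is a topological isomorphism.

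The Hausdorff and \'etale conclusions then come for free: $G\ltimes X$ is \'etale by Examples \ref{exs:groupoids}(d), and it is Hausdorff because $G$ is discrete and the domains $X_{g^{-1}} = U_{g^{-1}}$ are open in the Hausdorff space $X$, so $G\ltimes X \subset G\times X$ is Hausdorff as a subspace. Compactness of $X$ when $S$ is a monoid was already noted in the construction of the spectrum. The main obstacle I anticipate is the well-definedness in the first paragraph, namely turning the relation $\psi(s^*t)=e \Rightarrow s^*t \in E_S^\times$ into the pointwise identity $\theta_s(\chi) = \theta_t(\chi)$; this is where idempotent purity does the essential work, and it must be handled carefully because it simultaneously underlies the coherence of the partial action $\beta$, the descent of $\Phi$ to the quotient, and the injectivity of $\Phi$.
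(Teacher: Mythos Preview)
Your approach is correct and is essentially the same as the paper's, which simply cites Milan--Steinberg \cite{MS}: the partial action is defined exactly as you describe (the paper writes $X_{g^{-1}} = \bigcup_{t\in\psi^{-1}(g)} D_{t^*t}$ and $\beta_g(\chi)=\theta_t(\chi)$), well-definedness is \cite[Lemma~3.1]{MS}, and the isomorphism $\cG_S\cong G\ltimes X$ is \cite[Theorem~3.2]{MS}. You are supplying the details the paper defers to the reference.

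Two small remarks on those details. For the descent of $\Phi$ to the quotient, your justification ``$t^*t_1 \geq e$'' is not quite the right computation: the clean argument is simply that $te=t_1e\neq 0$ (since $\chi(e)=1$) gives $\psi(t)=\psi(t)\psi(e)=\psi(te)=\psi(t_1e)=\psi(t_1)$. For injectivity and for well-definedness of $\beta_g$, the step ``$\psi(s^*t)=e$ so $s^*t\in E_S^\times$'' tacitly assumes $s^*t\neq 0$; one way to secure this is to first pass to $sp$ and $tp$ with $p=(s^*s)(t^*t)$, noting $\chi(p)=1$, and then argue with these restrictions. You rightly flag this well-definedness as the crux; once it is in hand, everything else in your outline goes through as written.
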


\begin{proof} This result is described in \cite{MS}.  The partial action of $G$ on the spectrum $X$ of $S$ is defined by setting
$X_{g^{-1}} = \cup_{t\in \psi^{-1}(g)} D_{t^*t}$ (which can be empty). For $\chi \in X_{g^{-1}} $ we set $\beta_g(\chi) = \theta_t(\chi)$ where $t \in \psi^{-1}(g)$ is such that $\chi \in D_{t^*t}$.  This does not depend on the choice of $t$ as shown in \cite[Lemma 3.1]{MS}. Moreover, by \cite[Theorem 3.2]{MS}, the groupoid $\cG_S$ is canonically isomorphic to $G\ltimes X$.\footnote{In \cite{MS}, the proofs are carried out assuming that $S$ is $E$-unitary ({\it i.e.,} without zero) but they immediately extend to our setting.}
\end{proof}

\begin{prop}\label{prop:equiv_groupoid}  Let $S$ be a strongly $E^*$-unitary inverse semigroup. We assume that there is  an idempotent pure morphism $\psi : S^\times \to G$ such that if $g\not = e$ is in the image of $\psi$, then $\psi^{-1}(g)$ has a greatest element $\alpha_g$.
Then the groupoid $\cG_S$ is  equivalent \footnote{For this notion of equivalence of groupoids we refer  to \cite[Definition 2.2.15]{AD-R}.}
 to a transformation groupoid $G\ltimes Y$ for an action of $G$ on an Hausdorff locally compact space $Y$.
\end{prop}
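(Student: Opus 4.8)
The goal is to upgrade the partial action of $G$ on $X$ from Proposition \ref{prop:strongEunit} into a genuine (global) action, at the cost of enlarging the base space. The hypothesis that each nontrivial $\psi^{-1}(g)$ has a greatest element $\alpha_g$ is precisely what allows the partial bijections $\beta_g$ to be assembled into a global action by a standard dilation (globalization) procedure, and the resulting globalization groupoid $G\ltimes Y$ is then shown to be equivalent to $\cG_S \cong G\ltimes X$.

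First I would recall, from Proposition \ref{prop:strongEunit}, that $\cG_S$ is topologically isomorphic to the partial transformation groupoid $G\ltimes X$, where $X_{g^{-1}} = \cup_{t\in\psi^{-1}(g)} D_{t^*t}$ and $\beta_g = \theta_{\alpha_g}$ on its domain. The key structural point to extract from the greatest-element hypothesis is that $X_{g^{-1}} = D_{\alpha_g^*\alpha_g}$ is a single basic clopen set (rather than a union of possibly many), so the partial homeomorphisms $\beta_g$ have clopen domains and behave like the compression of a global action. I would then invoke the globalization theorem for partial actions of discrete groups on locally compact spaces (Abadie, or Exel--Laca--Quigg; see \cite{Exel_book}), which produces a locally compact Hausdorff space $Y$, a global action of $G$ on $Y$, and an equivariant open embedding of $X$ as an open subset of $Y$ whose $G$-translates cover $Y$, in such a way that the partial action on $X$ is recovered by restriction. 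The main work is to check that the globalization $Y$ is Hausdorff and locally compact; Hausdorffness is the delicate point in general, but here it follows because the domains $X_{g^{-1}}$ are clopen and the relevant graphs $\set{(\chi,\beta_g\chi):\chi\in X_{g^{-1}}}$ are closed in $X\times X$, exactly the condition guaranteeing a Hausdorff globalization.

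Once $Y$ and the global action are in hand, the equivalence of groupoids is essentially formal: the open subset $X\subset Y$ is $G$-full, meaning $G\cdot X = Y$, and the restriction of $G\ltimes Y$ to $X$ (in the sense of Section \ref{sec:reduce}, $\cG(X)$ for $\cG = G\ltimes Y$) is canonically isomorphic to the partial transformation groupoid $G\ltimes X$, hence to $\cG_S$. Since $X$ meets every orbit of $G$ in $Y$, the inclusion $X\hookrightarrow Y$ is an abstract transversal, and therefore $\cG_S \cong (G\ltimes Y)(X)$ is equivalent to $G\ltimes Y$ in the sense of \cite[Definition 2.2.15]{AD-R}. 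This uses the standard fact that the reduction of a groupoid to a full open (more generally, a suitably saturated) subset of its unit space is equivalent to the whole groupoid.

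\textbf{Main obstacle.} The principal difficulty is the topological quality of the globalization $Y$: a priori the globalization of a partial action need not be Hausdorff or locally compact, and one must verify that the hypotheses here force good behavior. I expect the crux to be showing that the graphs of the $\beta_g$ are closed, i.e. that $\set{(\chi,\theta_{\alpha_g}(\chi)) : \chi\in D_{\alpha_g^*\alpha_g}}$ is closed in $X\times X$ --- this is what separates this globalization from the pathological ones and is where the greatest-element hypothesis (giving clopen domains $D_{\alpha_g^*\alpha_g}$ coming from idempotents) is really used. Granting closedness of these graphs, the Hausdorffness and local compactness of $Y$ follow from the general globalization machinery, and the groupoid equivalence is then a routine consequence of realizing $\cG_S$ as the reduction of $G\ltimes Y$ to a full open subset of its unit space.
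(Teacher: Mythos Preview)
Your approach is correct but differs from the paper's. Both hinge on the same structural observation: the greatest-element hypothesis forces each domain $X_{g^{-1}}$ to equal the single basic clopen set $D_{\alpha_g^*\alpha_g}$, so that the graph of each $\beta_g$ is closed in $X\times X$. From there the two proofs diverge in the machinery they invoke. You appeal to Abadie's globalization theorem for partial actions (as in \cite{Exel_book}): the closed-graph condition guarantees that the enveloping space $Y$ is Hausdorff and locally compact, $X$ sits inside $Y$ as a full open subset, and the reduction $(G\ltimes Y)(X)$ recovers $G\ltimes X\cong\cG_S$, yielding the equivalence. The paper instead packages the same closedness as the statement that the canonical cocycle $c:G\ltimes X\to G$, $(g,x)\mapsto g$, is injective and closed (the map $\gamma\mapsto(r(\gamma),c(\gamma),s(\gamma))$ has closed image in $X\times G\times X$), and then quotes the Khoshkam--Skandalis/Renault--Sundar theorem \cite[Theorem 1.8]{KS02}, \cite[Theorem 6.2]{RS} that a groupoid carrying such a cocycle into a group $G$ is equivalent to a transformation groupoid $G\ltimes Y$. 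Your route is more hands-on and stays within the partial-action framework; the paper's route is a one-line appeal to a general groupoid result once the cocycle properties are checked, and it also records the extra information that when $S$ is a monoid (so $X$ is compact) the equivalence is realized by an isomorphism onto a reduction of $G\ltimes Y$.
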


\begin{proof} Let $t\in S^\times$ be such that $\psi(t) = g$. Since $t\leq \alpha_g$ we have $D_{t^*t} \subset D_{\alpha_{g}^*\alpha_g}$ and therefore $X_{g^{-1}} = D_{\alpha_{g}^*\alpha_g}$ is a closed subset of $X$. Moreover, for $\chi \in X_{g^{-1}}$ we have $\beta_g(\chi) = \theta_{\alpha_g}(\chi)$.  It follows that the cocycle $c: G\ltimes X \to G$ sending $(g,x)$ to $g$ is injective and closed. Injectivity means that the map $\gamma\in G\ltimes X  \mapsto (c(\gamma), s(\gamma))$ is injective. The cocycle is said to be closed if $\gamma \mapsto (r(\gamma), c(\gamma), s(\gamma))$ from $G\ltimes X$ into $X\times G\times X$ is closed. Since the cocycle $c$ is injective and closed, there exists a locally compact space $Y$ endowed with a continuous action of $G$ such that the transformation groupoid $G\ltimes Y$ is equivalent  to $G\ltimes X = \cG_S$. When $S$ has a unit, $X$ is compact and the equivalence is given by a groupoid isomorphism   $j$ from $G\ltimes X$ onto a reduction of $G\ltimes Y)$ (see \cite[Theorem 1.8]{KS02} and \cite[Theorem 6.2]{RS}).
\end{proof}

\begin{cor}\label{cor:toeplitz} Let  $P$ be a sub-semigroup of a group $G$ containing the unit.
\begin{itemize}
\item[(i)] The groupoid $\cG_{S(P)}$ is defined by a partial action of $G$ on a compact space.
\item[(ii)] If $(P,G)$ satisfies the Toeplitz condition, then $\cG_{S(P)}$ is equivalent to a transformation groupoid defined by an action of $G$ on a locally compact space.
\end{itemize}
\end{cor}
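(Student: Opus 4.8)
The plan is to deduce both assertions directly from the structural results already established for strongly $E^*$-unitary inverse semigroups, applied to the left inverse hull $S(P)$.

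First I would treat (i). The key observation is that $S(P)$ is strongly $E^*$-unitary. Indeed, Proposition \ref{prop:Eunit}(2) provides the map $\psi : S(P)^\times \to G$ sending $s = \ell_{p_1}^*\ell_{q_1}\cdots \ell_{p_n}^*\ell_{q_n}$ to $p_1^{-1}q_1\cdots p_n^{-1}q_n$, which is a well-defined morphism satisfying $\psi^{-1}(e) = E_{S(P)}^\times$; by Definition \ref{def:idpure} this says precisely that $\psi$ is an idempotent pure morphism and hence that $S(P)$ is strongly $E^*$-unitary. Since $S(P)$ is a monoid, its unit being $\ell_e$, I would then apply Proposition \ref{prop:strongEunit} with this particular morphism $\psi$ into the ambient group $G$: it yields a partial action of $G$ on the spectrum $X$ of $S(P)$ for which $\cG_{S(P)}$ is topologically isomorphic to $G\ltimes X$, with $X$ compact. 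This is exactly assertion (i).

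For (ii) I would verify that, under the Toeplitz condition $S(P) = S(G,P)$, the hypothesis of Proposition \ref{prop:equiv_groupoid} holds for the morphism $\psi$. That hypothesis requires that for every $g\neq e$ in the image of $\psi$, the fibre $\psi^{-1}(g)$ possess a greatest element. This is supplied by Proposition \ref{prop:Toeplitz}: the Toeplitz condition guarantees that $\alpha_g\neq 0$ exactly when $g$ lies in the image of $\psi$, and that in this case $\psi(\alpha_g) = g$ and $\alpha_g$ is the greatest element of $\psi^{-1}(g)$. With this hypothesis verified, Proposition \ref{prop:equiv_groupoid} immediately gives that $\cG_{S(P)}$ is equivalent to a transformation groupoid $G\ltimes Y$ for some action of $G$ on a locally compact space $Y$, which is assertion (ii).

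The argument is thus essentially a matter of checking that the hypotheses of the two preceding propositions are met in the concrete situation of a left inverse hull, so no new computation is required. The only point deserving attention is that the relevant morphism $\psi$ takes values in the full ambient group $G$ (not merely in $S(P)/\sigma$), so that Propositions \ref{prop:strongEunit} and \ref{prop:equiv_groupoid} genuinely produce a partial action, respectively a transformation groupoid, of the group $G$ we started with; this is what makes the resulting descriptions of $\cG_{S(P)}$ concern $G$ itself.
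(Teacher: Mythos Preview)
Your proof is correct and follows the same route as the paper: invoke Proposition~\ref{prop:Eunit}(2) to obtain the idempotent pure morphism $\psi:S(P)^\times\to G$, apply Proposition~\ref{prop:strongEunit} (noting that $S(P)$ has unit $\ell_e$, which yields compactness of the spectrum) for (i), and combine Propositions~\ref{prop:Toeplitz} and~\ref{prop:equiv_groupoid} for (ii). Your write-up is simply more explicit than the paper's terse two-line version.
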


\begin{proof} (i) By Proposition \ref{prop:Eunit} there is an idempotent pure morphism $\psi:S(P)^\times \to G$ and we use Proposition \ref{prop:strongEunit}. 

(ii) follows from Propositions \ref{prop:Toeplitz}  and \ref{prop:equiv_groupoid}.
\end{proof}

\subsection{Weak containment for inverse semigroups}
Let $S$ be an inverse semigroup. Let us recall the definition of the full and reduced $C^*$-algebras of $S$ (for more details, see \cite[\S 2.1]{Pat}). Given $f,g\in \ell^1(S)$, we set
$$(f\star g)(t) = \sum_{uv = t} f(u) g(v), \quad f^*(t) = \overline{f(t^*)}.$$
Then $\ell^1(S)$ is Banach $*$-algebra, and the {\it full $C^*$-algebra} $C^*(S)$ of $S$ is defined as the enveloping $C^*$-algebra of $\ell^1(S)$. It is the universal $C^*$-algebra for the representations of $S$ by partial isometries. The {\it left regular representation} $\pi_2: S \to \cB(\ell^2(S))$ is defined by
$$\pi_2(t)\delta_u = \delta_{tu} \,\,\,\hbox{if}\,\,\, (t^*t)u =u,\quad \pi_2(t)\delta_u = 0 \,\,\, \hbox{otherwise}.$$
The extension of $\pi_2$ to $\ell^1(S)$ is faithful. The {\it reduced $C^*$-algebra} $C^*_{r}(S)$ of $S$ is the sub-$C^*$-algebra of $\cB(\ell^2(S))$ generated by $\pi_2(S)$. We still denote by $\pi_2: C^*(S) \to C^*_{r}(S)$ the extension of the left regular representation to $C^*(S)$.

When $S$ has a zero, we have $\pi_2(0)\delta_0 = \delta_0$ and $\pi_2(0)\delta_t = 0$ if $t\not= 0$. It follows that $\C\delta_0$ is an ideal in $C^*(S)$ that it is preferable to get rid of. So we set $C^*_{0}(S) = C^*(S)/\C\delta_0$ and similarly
 $C^*_{r,0}(S) = C^*_{r}(S)/\pi_2(\C\delta_0)$. We denote by $\pi_{2,0}$ the canonical surjective homomorphism from $C^*_{0}(S)$ onto $C^*_{r,0}(S)$ (see \cite{Nor14}).
 
 As shown in \cite{Pat} and \cite{KS02}, the $C^*$-algebras $C^*_{0}(S)$ and $C^*_{r,0}(S)$ are canonically isomorphic to 
$C^*(\cG_S)$ and $C^*_{r}(\cG_S)$ respectively.\footnote{ More precisely in \cite{Pat,KS02}, the authors consider the $C^*$-algebras $C^*(S)$ and $C^*_{r}(S)$, but their definition of $\cG_S$ is also slightly different because for the space $X = \cG_{S}^{(0)}$ they do not require that the map $\chi$ from $E_S$ into $\set{0,1}$ satisfy $\chi(0) = 0$. Their proof also works in our setting.} This is the reason for having introduced $C^*_{0}(S)$ and $C^*_{r,0}(S)$. Note that $C^*_{r,0}(S)$ is nuclear if and only $C^*_{r}(S)$ is so, and that $\pi_2$ is injective if and only if it is the case for $\pi_{2,0}$.

 \begin{defn} We say that $S$ has the {\it weak containment property} if $\pi_2$ (or equivalently $\pi_{2,0}$) is an isomorphism.
 \end{defn}

Observe that $S$ has the weak containment property if and only if the groupoid $\cG_S$ has this property.

Recall that a semigroup $S$ is {\it left amenable} if there exists a left invariant mean on $\ell^\infty(S)$. An inverse semigroup with zero is of course left amenable since the Dirac measure at zero is a left invariant mean. 

 If $C^*_{r}(S)$ is nuclear, the groupoid $\cG_S$ is amenable and therefore $S$ has the weak containment property. What about the converse?

 The following example shows that left amenability, even in the absence of zero, does not imply  the weak containment property. It also shows that the weak containment property is strictly weaker than  the nuclearity of $C^*_{r}(S)$.
 
 \begin{ex}\label{ex:Willett} Let $\Gamma$ be a residually finite group and $(N_k)_{k\geq 0}$ a decreasing sequence as in Example \ref{ex:HLS}, whose notation we keep. Let $S = \set{q_k(t) : k\in \widehat{\N}, t\in \Gamma}$. Formally, $S = \cG$, the HLS groupoid defined in Example \ref{ex:HLS} but we view $S$ as an inverse semigroup in the following way. The product is given by $q_m(t).q_n(u) = q_{m\wedge n}(tu)$ where $m\wedge n$ is the smallest of the two elements $m,n$. We set $q_m(t)^* = q_m(t^{-1})$. The set $E_S$ of idempotents is $\set{q_m(e) : m \in \widehat{\N}}$ that we identify with $\widehat{\N}$. The product of two idempotents is given by $m.n = m\wedge n$. The spectrum $X$ is the set $\set{\chi_m : m \in \widehat{\N}}$ where $\chi_m(n) = 1$ if and only if $m\leq n$. It is homeomorphic to the compact space $\widehat{\N}$.  The groupoid $\cG_S$ associated with $S$ is the space of equivalence classes of pairs $\big(q_m(t),\chi_k\big)$ with $k\leq m$,  where $\big(q_m(t),\chi_k\big) \sim \big(q_n(u),\chi_k'\big)$ if and only if $k = k'$ and $q_k(t) = q_k(u)$. The map sending the class of $\big(q_m(t),\chi_k\big)$ to $q_k(t)$ is an isomorphism of topological groupoids from $\cG_S$ onto the HLS-groupoid $\cG$. 
 
  The maximal group homomorphism image of $S$ is the finite group $\Gamma/N_0$ and $\sigma : S \to S/\sigma = \Gamma/N_0$  is $q_m(t) \mapsto q_0(t)$. It is not idempotent pure.  Note that $S$ has a zero if and only if $N_0 = \Gamma$, the zero being then $q_0(e)$.

Let us observe that $S = \sqcup_{k\in \widehat{\N}} \Gamma/N_k$ is a Clifford semigroup. 

 Since $S/\sigma$ is amenable, this semigroup $S$ is left amenable by a result of Duncan and Namioka (see \cite[Proposition A.0.5]{Pat}). If $\Gamma = \F_2$ and the sequence $(N_k)_{k}$ is the one defined by Willett in \cite{Wil15}, then $S$ has the weak containment property but $C_{r}^*(S)$ is not nuclear. Moreover, $S$ is a Clifford semigroup for which not every subgroup is amenable, although it has the weak containment property.
 
On the other hand, if we realize $\F_2$ as a finite index subgroup of $SL(2,\Z)$, and choose $N_k$ to be the intersection with $\F_2$ of the kernel of the reduction map $SL(2,\Z) \to SL(2,\Z/2^k\Z)$, then the corresponding HLS-groupoid has not the weak containment property, as observed in \cite[Remarks 2.9]{Wil15}. Hence, the left amenability of $S$ does not imply its weak containment property in general.
\end{ex}

 \begin{rem}\label{thm:strongEunit} Let $S$ be a strongly $E^*$-unitary inverse semigroup, and let $\psi : S^\times \to G$ be an idempotent pure morphism. 
 Assume that $G$ is amenable. Then we have $C^*(S) = C^*_{r}(S)$ and $C^*_{r}(S)$ is nuclear. Indeed,  by Proposition \ref{prop:strongEunit}, the groupoid $\cG_S$ is associated to a partial action of $G$ on the spectrum of $S$.  If $G$ is amenable, then $\cG_S$ is amenable  (see \cite{RW})  and therefore our statement  holds.
 
 Assume  that $G$ is exact. Is is true that the weak containment property of $S$  is equivalent to the nuclearity of $C^*_{r}(S)$?
\end{rem}

\subsection{Weak containment for semigroups embedded in groups}\label{sect:alg_sg}
In this section we consider a discrete group $G$ and a sub-semigroup $P$ which contains the unit $e$. We denote by $\lambda$ the left regular representation of $G$, and for $p\in P$ we denote by $V_p: \ell^2(P) \to \ell^2(P)$ the isometry given by $V_p \delta_q = \delta_{pq}$. The {\it reduced $C^*$-algebra} $C^*_{r}(P)$ of $P$ is the $C^*$-algebra generated by the isometries $V_p$, $p\in P$. 

The right definition of the full $C^*$-algebra of $P$ is more speculative. The universal $C^*$-algebra generated by elements $v_p$, $p\in P$, such that $v^*_{p} v_p = 1$ and $v_p v_q = v_{pq}$ for every $p,q\in P$, is too big.  For instance Murphy proved that, for the commutative semigroup $\N^2$, this universal $C^*$-algebra is not   nuclear \cite{Mur}. A reasonable definition for the {\it full $C^*$-algebra} of $P$ was introduced by Xin Li in \cite[Definition 2.2]{Li12} and a variant  in \cite[Definition 3.2]{Li12}. It is this variant (denoted $C^*_{s}(P)$ in \cite{Li12}) that we adopt as the definition of the full $C^*$-algebra of $P$ in the sequel, and we denote it  $C^*(P)$. By \cite[Proposition 3.3.1]{Nor14}, $C^*(P)$ can be defined as $C^*_{0}(S(P))$. Let us recall (see \cite[Lemma 3.2.2]{Nor14}) that the inverse semigroup $S(P)$ is canonically isomorphic to the inverse semigroup of partial isometries 
$$V(P) = \set{V_{p_1}^*V_{q_1} \cdots V_{p_n}^*V_{q_n} : n\in \N, p_i,q_i\in P}.$$
Let us also recall that there is a surjective homomorphism $h: C^*_{r,0}(S(P)) \to C^*_{r}(P)$ such that
$h(\pi_2(\ell_p)) = V_p$ for $p\in P$ (see \cite[Lemma 3.2.12]{Nor14}). Therefore we have the following situation
$$C^*(P) = C^*_{0}(S(P)) \equiv C^*(\cG_{S(P)}) \stackrel{\pi_{2,0}}{-\!\!\!\twoheadrightarrow} C^*_{r,0}(S(P))\equiv C^*_{r}(\cG_{S(P)}) \stackrel{h}{-\!\!\!\twoheadrightarrow} C^*_{r}(P).$$

\begin{defn} We say that $P$ has the {\it weak containment property} if $C^*(P) = C^*_{r}(P)$.
\end{defn}
Note that the weak containment property of $P$ implies the weak containment property of $S(P)$.

\begin{defn}\label{def:qlog} Let $(P,G)$ as above and assume in addition that $P\cap P^{-1} = \set{e}$. Then we define on $G$ a partial order by setting $x\leq y$ if $x^{-1}y\in P$.
We say that $(P,G)$ is a {\it quasi-lattice ordered group} if for every $g\in G$, we have either $P\cap gP = \emptyset$  or $P\cap gP = rP$  for some $r\in P$ (equivalently, every pair of elements in $G$ having a common upper bound has a least common upper bound (see \cite[Lemma 7]{CL} for more on this)).
\end{defn}

The Toeplitz condition for $(P,G)$ is equivalent to the following property:  for every $g\in G$ such that $E_P \lambda_g E_P \not= 0$, there exist $p_1,\dots, p_n, q_1,\dots, q_n \in P$ such that $E_P \lambda_g E_P = V_{p_1}^*V_{q_1}\dots V_{p_n}^*V_{q_n}$ (see for instance the proof of \cite[Proposition 4.1]{Nor15}).

Quasi-lattice ordered semigroups and semigroups $(P,G)$ such that $G = P^{-1}P$ satisfy the Toeplitz condition (see \cite[\S 8]{Li13}). Xin Li has also introduced an important condition for $P$, he called {\it independence} (\cite[Definition 2.26]{Li12}). We will not describe it here. We only note that when $P$ is contained in a group, this condition is equivalent to the injectivity of $h$  (see \cite[Theorem 3.2.14]{Nor14}) and that it is satisfied for quasi-lattice ordered groups (see \cite[Lemma 28]{Li12}).

\begin{prop}\label{prop:indep} Let $P$ be a sub-semigroup of a group $G$ with $e\in P$. Assume that $P$ satisfies   the independence condition. Then the nuclearity of $C^*_{r}(P)$ implies the weak containment property for $P$, i.e., $C^*(P) =C^*_{r}(P)$.
\end{prop}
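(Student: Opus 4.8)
The plan is to reduce the statement about $P$ to a statement about the groupoid $\cG_{S(P)}$ and then invoke Theorem \ref{thm:equiv_amen}. Recall from the discussion preceding the statement that there is a chain of canonical surjections
$$C^*(P) = C^*_0(S(P)) \equiv C^*(\cG_{S(P)}) \stackrel{\pi_{2,0}}{\twoheadrightarrow} C^*_{r,0}(S(P)) \equiv C^*_r(\cG_{S(P)}) \stackrel{h}{\twoheadrightarrow} C^*_r(P),$$
and that, by \cite[Theorem 3.2.14]{Nor14}, the independence condition for $P$ is precisely the injectivity of $h$. So the first step is to note that, under our hypothesis, $h$ is both injective and surjective, hence an isomorphism. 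In particular $C^*_r(\cG_{S(P)}) \equiv C^*_r(P)$, so the nuclearity of $C^*_r(P)$ is equivalent to the nuclearity of $C^*_r(\cG_{S(P)})$.

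Next I would record that $\cG_{S(P)}$ is \'etale: by Corollary \ref{cor:toeplitz}(i) it arises from a partial action of $G$ on a compact space, and such partial transformation groupoids are \'etale (Examples \ref{exs:groupoids}(d)). In particular every isotropy group $\cG_{S(P)}(x)$ is discrete. This discreteness is exactly what unlocks Theorem \ref{thm:equiv_amen}: under the hypothesis of discrete isotropy, condition (c) (nuclearity of the reduced $C^*$-algebra) implies condition (a) (measurewise amenability). Applying this to $\cG_{S(P)}$, the nuclearity of $C^*_r(\cG_{S(P)})$ established in the previous step forces $\cG_{S(P)}$ to be measurewise amenable.

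Finally, measurewise amenability yields the weak containment property for the groupoid: by Theorem \ref{thm:equiv_amen}, (a) $\Rightarrow$ (e), so the canonical surjection $\pi_{2,0}$ from $C^*(\cG_{S(P)})$ onto $C^*_r(\cG_{S(P)})$ is injective, hence an isomorphism. Composing with the isomorphism $h$ obtained in the first step, the full composite map $C^*(P) \to C^*_r(P)$ is an isomorphism, which is precisely the weak containment property for $P$.

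The argument is essentially a matter of threading the isomorphism $h$ correctly through the displayed diagram, the only substantive analytic input being Theorem \ref{thm:equiv_amen}. The step I expect to require the most care is verifying that the hypotheses of that theorem genuinely apply---namely that $\cG_{S(P)}$ has discrete isotropy---which is why I would isolate the \'etale structure coming from Corollary \ref{cor:toeplitz}(i) before invoking the implication (c) $\Rightarrow$ (a).
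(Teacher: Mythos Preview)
Your proof is correct and follows essentially the same route as the paper's: use independence to identify $C^*_r(P)$ with $C^*_r(\cG_{S(P)})$, then invoke Theorem~\ref{thm:equiv_amen} (via the discrete isotropy of the \'etale groupoid $\cG_{S(P)}$) to pass from nuclearity to amenability and hence to weak containment. The paper's version is simply more terse, jumping directly to ``$\cG_{S(P)}$ is amenable'' without spelling out the \'etale/discrete-isotropy justification that you carefully isolate.
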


\begin{proof} Assume that $C^*_{r}(P)$ is nuclear. Since $C^*_{r}(P) = C^*_{r}(\cG_{S(P)})$, we see that the groupoid $\cG_{S(P)}$ is amenable. It follows that $S(P)$ (and thus $P$) has  the weak containment property.
\end{proof}
 
\begin{prop}\label{prop:left_amen} Let $P$ be a sub-semigroup of a group $G$ with $e\in P$.
\begin{itemize}
\item[(1)] If $G$ is amenable, then  $C^*_{r}(P)$ is nuclear.
\item[(2)] If $P$ is left amenable, then $PP^{-1}$ is an amenable subgroup of $G$ and $C^*_{r}(P)$ is nuclear.
\end{itemize}
\end{prop}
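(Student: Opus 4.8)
The plan is to route both statements through the groupoid $\cG_{S(P)}$ and the chain of surjections $C^*_r(\cG_{S(P)}) \equiv C^*_{r,0}(S(P)) \stackrel{h}{\twoheadrightarrow} C^*_r(P)$ recorded above, exploiting that a quotient of a nuclear $C^*$-algebra is again nuclear. For part (1) I would first note that Proposition \ref{prop:Eunit}(2) provides an idempotent pure morphism $\psi : S(P)^\times \to G$, so $S(P)$ is strongly $E^*$-unitary. Since $G$ is amenable, Theorem \ref{thm:strongEunit}(1) (equivalently: by Corollary \ref{cor:toeplitz}(i) the groupoid $\cG_{S(P)}$ comes from a partial action of the amenable group $G$, hence is amenable, and Theorem \ref{thm:equiv_amen} applies) shows that $C^*_{r,0}(S(P)) = C^*_r(\cG_{S(P)})$ is nuclear. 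As $C^*_r(P)$ is the image of this algebra under the surjection $h$, it is nuclear as well.

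For part (2) the first task is to recover the subgroup structure. Let $m$ be a left invariant mean on $\ell^\infty(P)$ with associated finitely additive probability $\mu$. For $a\in P$, left cancellativity in $G$ gives $\{x\in P : ax\in aP\} = P$, so the left translate $L_a \mathbf{1}_{aP}$ equals $\mathbf{1}_P$; invariance then forces $\mu(aP) = 1$ for every $a\in P$. If some pair satisfied $pP\cap qP = \emptyset$, we would get $\mu(pP)+\mu(qP) = 2 > 1 = \mu(P)$, a contradiction. Hence $pP\cap qP \neq\emptyset$ for all $p,q$, i.e. $P$ is left reversible, and Proposition \ref{prop:Eunit}(1) yields that $H := PP^{-1}$ is a subgroup of $G$.

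It then remains to see that $H$ is amenable and to conclude. I would transfer the mean $m$ from $P$ to $H$: by reversibility the sets $\{Pp^{-1}\}_{p\in P}$ form an increasing directed family (if $r\in pP\cap qP$ then $Pp^{-1}, Pq^{-1}\subseteq Pr^{-1}$) whose union is all of $H$, and each bijection $P\to Pp^{-1}$, $z\mapsto zp^{-1}$, pushes $m$ forward to a mean on $Pp^{-1}$; a weak-$*$ cluster point of these along the directed set is a left invariant mean on $\ell^\infty(H)$, so $H$ is amenable. Finally $P$ is a subsemigroup of the amenable group $H$ (the morphism of Proposition \ref{prop:Eunit}(2) already takes values in $PP^{-1}$), so part (1) applies and $C^*_r(P)$ is nuclear.

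The routine portion is the reduction to nuclearity of a quotient and the reversibility computation; \textbf{the one delicate point is the transfer of the invariant mean to the group of quotients} $H = PP^{-1}$, where one must verify left invariance of the cluster mean under all of $H$ rather than merely under $P$. This is the classical fact that a left amenable cancellative semigroup has an amenable group of quotients, and if preferred one may simply invoke it as a known result instead of reproving the directed-limit construction.
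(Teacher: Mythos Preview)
Your proof is correct and follows essentially the same route as the paper: for (1) both arguments show $\cG_{S(P)}$ is amenable via the partial action of the amenable group $G$ (Propositions \ref{prop:Eunit}, \ref{prop:strongEunit}) and pass nuclearity down to the quotient $C^*_r(P)$; for (2) both deduce left reversibility, then amenability of $PP^{-1}$, then apply (1). The only difference is cosmetic: where the paper simply cites Paterson's monograph \cite[Propositions 1.23, 1.27]{Pat88} for the implication ``$P$ left amenable $\Rightarrow$ $P$ left reversible and $PP^{-1}$ amenable'', you unpack the standard mean-transfer argument explicitly.
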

 
 \begin{proof} Assume that $G$ is amenable. By Proposition \ref{prop:Eunit} and Proposition \ref{prop:strongEunit} the groupoid $\cG_{S(P)}$ is amenable since it is isomorphic to the groupoid defined by a partial action of $G$. It follows that $C^*_{r}(\cG_{S(P)})$ is nuclear as well as its quotient $C^*_{r}(P)$.
 
 Suppose now that $P$ is left amenable. Then it is left reversible ({\it i.e.,} $pP \cap qP \not =\emptyset$ for all $p,q\in P$) and therefore $G'=PP^{-1}$ is an amenable subgroup of $G$ (see \cite[Propositions 1.23, 1.27]{Pat88}). To see that $C^*_{r}(\cG_{S(P)})$ is nuclear, we apply the first part of the proof.
 \end{proof}

\begin{rem}\label{thm:semi} Let $P$ be a sub-semigroup of a group $G$, containing the unit $e$. 
Assume that $C^*$-algebra $C^*_{r}(P)$ is nuclear and that $P$ satisfies the independence condition. Then  $C^*(P) =C^*_{r}(P)$ by Proposition \ref{prop:indep}. Is the converse true when $G$ is exact?
\end{rem}

\bibliographystyle{plain}

\end{document}